\newtheorem{theorem}{Theorem}[section]
\newtheorem{lemma}[theorem]{Lemma}
\newtheorem{remark}[theorem]{Remark}
\begin{document}

\title[Parabolic reaction-diffusion systems]{ High Order Parameter-Robust Numerical Method for  a System of $(M\geq 2)$  Coupled Singularly Perturbed Parabolic Reaction-Diffusion Problems}
\author[ M. Kumar ]{Mukesh Kumar }
 \address{Differential Equations and Numerical
Analysis Group,
 Department of Mathematical Sciences,
Norwegian University of Sciences and Technology,
 NO-7491, Trondheim, Norway } \email{mukesh.kumar@math.ntnu.no }

\author[ S. C. S. Rao ]{S. Chandra Sekhara Rao }
 \address{
  Department of Mathematics,
  Indian Insitute of Technology Delhi,
  Hauz Khas, New Delhi-110016, India
} \email{scsr@maths.iitd.ernet.in }





\subjclass[2000]{65M06, 65N06, 65N12}

\abstract{ We present a high order parameter-robust numerical method for a
 system of  $(M\geq 2)$ coupled  singularly perturbed parabolic
reaction-diffusion problems. A small  perturbation parameter
$\varepsilon $ is multiplied with the second order spatial
derivatives in all the equations. The parabolic boundary layer
appears in the solution of the problem when the perturbation
parameter $\varepsilon$ tends to zero. To obtain a high order approximation to the solution of this problem, we propose a numerical method  that employs the Crank-Nicolson method on an uniform mesh in  time
direction, together with a hybrid finite difference scheme on a generalized Shishkin mesh
in spatial direction. We prove that the resulting method
  is parameter-robust or $\varepsilon$-uniform  of second
order in time and almost fourth order in spatial variable, if the
discretization parameters satisfy a non-restrictive relation.
Numerical experiments are presented to validate
 the theoretical results and also indicate that the relation between the discretization parameters is
 not necessary in practice.}

\keywords{Singular perturbation, Parabolic reaction-diffusion problems,
Coupled systems, High order compact scheme, Crank-Nicolson method,
Parameter robust method, Generalized Shishkin mesh.}

\maketitle
\section{Introduction}
We consider the following system of $(M\ge2)$ coupled  singularly
perturbed parabolic reaction-diffusion problems
\begin{equation}\label{a1}
\textbf{\emph{L}}_{\varepsilon}\textbf{\emph{u}}:=\frac{\partial
\textbf{\emph{u}}}{\partial
t}+\textbf{\emph{L}}_{x,\varepsilon}\textbf{\emph{u}}=\textbf{\emph{f}},~~~~~~(x,t)\in
D:=\Omega\times(0,T]=(0,1)\times(0,T],
\end{equation}
\begin{equation}\label{a2}
\textbf{\emph{u}}(0,t)=\mathbf{0},~~~~~\textbf{\emph{u}}(1,t)=\mathbf{0},~~~~~~\forall
t\in [0,T],~~~~~~~~~~~~~~~~~~~~~~~~~~~~~~~~~~~~~~~~~~
\end{equation}
\begin{equation}\label{a3}
\textbf{\emph{u}}(x,0)=\mathbf{0},~~~~~~~~\forall x\in
\overline{\Omega}.~~~~~~~~~~~~~~~~~~~~~~~~~~~~~~~~~~~~~~~~~~~~~~~~~~~~~~~~~~~~~~~~~~~
\end{equation}
 \noindent The spatial differential operator $\textbf{\emph{L}}_{x,\varepsilon}$ is defined by
$$\hspace*{-0.0cm} \textbf{\emph{L}}_{x,\varepsilon}=\left(%
\begin{array}{cccc}
 -\varepsilon\frac{\partial^2 }{\partial x^2} &    & 0 \\
     &  \ddots&  \\
   0 &  & -\varepsilon\frac{\partial^2 }{\partial x^2}\\
\end{array}%
\right) + \textbf{\emph{A}},~\mbox{with}~\textbf{\emph{A}}=\left(%
\begin{array}{cccc}
  a_{11}(x) &  ...&a_{1M}(x) \\
 \vdots   &\ddots& \vdots \\
 a_{M1}(x)& ...& a_{MM}(x) \\
\end{array}%
\right),$$ \noindent where $\varepsilon$ is a  small parameter that 
satisfies $0<\varepsilon \ll1$. Denote the boundaries of the domain
$D$ by $\Gamma:=\Gamma_0\bigcup
\Gamma_1$, with  $\Gamma_0=\{(x,0)|x\in \Omega\}$ and $\Gamma_1=\{(x,t)|x=0,1,~t\in[0,T]\}$. We assume that the coupling matrix
$\textbf{\emph{A}}=(a_{ij}(x))_{M\times M}$ satisfies the following
positivity conditions at each  $x\in\overline{\Omega}$
\begin{equation}\label{eq:assump1}
a_{ij}\leqslant 0,~~~i\neq j,~~~~
\end{equation}
\begin{equation}\label{eq:assump2}
a_{ii}>
0,~~~\displaystyle\sum_{j=1}^{M}a_{ij}\ge\beta^*>0,~~~i=1,\dots,M.
\end{equation}
If  $(5)$ is not satisfied directly, we consider the transformation
$\widetilde{\textbf{\emph{u}}}(x,t)=\textbf{\emph{u}}(x,t)\exp(-\beta_0t)$
with $\beta_0>0$ (sufficiently large) in order to transform the
diagonal entries such that $(5)$ holds. Also, we  assume that
sufficient regularity and compatibility conditions  hold among the
data of the problem $(1)$-$(3)$  such that the exact solution
$\textbf{\emph{u}}\in C^{6,3}(\overline{D})^M$. In the analysis we assume the following compatibility conditions
(see \cite{lady})
\[\frac{\partial^{s+q}\textbf{\emph{f}}}{\partial x^s \partial t^q}(0,0)=\frac{\partial^{s+q}\textbf{\emph{f}}}{\partial x^s \partial t^q}(1,0)=\mathbf{0},~~~\mbox{for}~~0\leqslant s+2q\leqslant 4.\]

The numerical analysis of singular perturbation problems has always
suffered  from serious difficulties due to the boundary layer
behavior of the solution when the perturbation parameter becomes
small. Recent years have witnessed substantial progress in the
development of layer adapted meshes to design a special class of
numerical methods, so called \emph{parameter-robust numerical
methods}, that converge uniformly with respect to the perturbation
parameter (see \cite{roos96}). Parameter-robust numerical methods based on fitted meshes, particularly the Shishkin meshes gained popularity because of their simplicity and applicability to more
complicated problems in higher dimensions, see \cite{kopteva} for more details. Several numerical studies for coupled
system of  singularly perturbed reaction-diffusion problems are
considered in \cite{linss04},\cite{linss08},\cite{mad},\cite{mat},\cite{scsr}
and the references therein.\\

 To solve the system of two coupled
singularly perturbed parabolic reaction-diffusion problems with the
distinct small perturbation parameters in each equations, Gracia and
Lisbona \cite{gra07 } proposed a uniformly convergent numerical
method by using the classical backward Euler scheme in time and the central
difference scheme in spatial direction, and  proved that the error bound  is
$O(\Delta t + N^{-2+q}\ln^2 N)$ with the assumption $N^{-q}\leqslant
C\Delta t$, ~$0<q<1$. High order numerical methods have always been an interest for the numerical community as they provide good numerical approximations with low computational cost.
 Recently, Clavero et al. \cite{clav06} gave an attempt to design a
high order uniformly convergent numerical method for solving the
system of two coupled singulary perturbed parabolic
reaction-diffusion problems with the distinct small perturbation
parameters  in each equations. To increase the order of uniform
convergence, the authors in \cite{clav06} considered the
Crank-Nicolson method on an uniform mesh in time direction and central
difference scheme on a standard Shishkin mesh in spatial direction, and
proved that the error bound  is $O((\Delta t)^2 + N^{-2+q}\ln^2 N)$
with the assumption $N^{-q}\leqslant C\Delta t$, ~$0<q<1$. To our
knowledge this is the only high order parameter-robust numerical
method is available in the literature for solving  parabolic reaction-diffusion system $(1)$-$(3)$. In the present paper, our objective is to integrate the available techniques for high order approximations (eg. \cite{clav06} and \cite{kumar}), to design a high order parameter-robust numerical method for solving parabolic reaction-diffusion system $(1)$-$(3)$. For a high order approximation, we consider  the Crank-Nicolson method  on an uniform
mesh in time, together with a hybrid scheme which is a suitable
combination of the fourth order compact difference scheme and the
standard central difference scheme on a generalized Shishkin mesh in
spatial direction. It can be seen that the combination of
Crank-Nicolson method in time direction with hybrid scheme in
spatial direction does not satisfy the discrete maximum principle
except if the restrictive condition $\Delta t \leqslant
C(L/N)^2$ is imposed. In this article,  we follow the approach of Clavero et al. \cite{clav05} to overcome this difficulty. First, 
some auxiliary problems are considered which permits to prove appropriate bounds
for  local error of the Crank-Nicolson method. Then the uniform
convergence analysis of the scheme used to discretize these
auxiliary problems is discussed. Finally, using the recursive
arguments and the uniform stability of the totally discrete scheme, we
claim that the present method is uniformly convergent of second
order  in time and almost fourth order  in spatial variable. It
should be noted here that in the theoretical proof  we assume the
totally discrete scheme operator satisfy the uniform stability as a
conjecture in Section 5. As so far it is an
open problem to prove the uniform stability of totally discrete
scheme theoretically (see also  \cite{clav06}). While in the support
we presented the numerical tables (Tables 2,4 and 6) that shows the spectral radius of the
totally discrete operator is strictly less than one, independent of $\varepsilon$ and discretization  parameters, in Section 6. \\

 This paper is arranged as follows. In Section 2, a  priori bounds on the solution of $(1)$-$(3)$ and its
 derivative are constructed.
The time semidiscretization  using the Crank-Nicolson method  and its local consistency error 
is given in Section 3. In this section we also discuss the
asymptotic behavior of the solution of semidiscretized problems and their
spatial derivatives. In Section 4, the generalized Shishkin mesh is
given and the spatial semidiscretization  with a hybrid scheme
which is a suitable combination of the fourth order compact
difference scheme and the central difference scheme is described  on generalized Shishkin mesh for the set of stationary singularly
perturbed problems studied in Section 3. It is also proved that the
spatial semidiscretization  is  almost fourth order uniformly
convergent on generalized Shishkin mesh. In Section 5, semidiscretization steps are combined to give the total discretization  and its uniform convergence is proved. The numerical experiments are conducted
to demonstrate the efficiency of the proposed method in Section 6. Finally,
 conclusions are included in Section 7.
\\

\noindent\textbf{Notations}:~~In the remaining parts of the paper,
$C$ and $\emph{\textbf{C}}=C(1,\ldots,1)^T$ are the generic positive
constant independent of $\varepsilon$ and discretization parameters.
Define $\textbf{\emph{v}}\leqslant\textbf{\emph{w}}$~ if ~$v_i\leqslant
w_i,~1\leqslant i\leqslant M$ and $|\textbf{\emph{v}}|=(|v_1|,\dots,|v_M|)^T$.
We consider the maximum norm and  it is denoted by $||.||_H$, where
$H$ is a closed and bounded set. For a real valued function $v\in
C(H)$ and for a vector valued function
$\textbf{\emph{v}}=(v_1,\dots,v_M)^T\in C(H)^M$, we define
$$||v||_H=\max_{x\in H}|v(x)|~~~~~~~~\mbox{and}~~~~~~~||\textbf{\emph{v}}||_H=\max\{||v_1||_H,\dots,||v_M||_H\}.$$
If $H=\overline{\Omega}$, we drop $H$ from the notation. The
analogous discrete maximum norm on the mesh $\overline{\Omega}^S_N$
is denoted by $||.||_{\overline{\Omega}^S_N}$. For any function
$g\in C(\overline{\Omega})$, $g_i$ is used for $g(x_i)$; if
$\textbf{g}\in C(\overline{\Omega})^M$ then
$\textbf{g}_i=\textbf{g}(x_i)=(g_{1,i},\dots,g_{M,i})^T.$ ~For
simplicity, we use $L_{N_0}$ for $L(N_0)$.  If $N_0=N$, we drop $N$
as subscript from the notation $L_N$.

\section{Properties of the exact solution}

Following the technique of Theorem 1 in \cite{gra07 }, we can show
that the operator $\textbf{\emph{L}}_{\varepsilon}$ in (1) satisfies the
following maximum principle.

\begin{lemma}\label{tslem:cmp}
Let $\textbf{y}\in (C^{2,1}(D)\cap C^{0,0}(\overline{D}))^M.$ Let
$\textbf{y}(x,0)\geq \mathbf{0}$ on $\overline{\Omega}$ and
$\textbf{\emph{y}}(0,t)\geq \mathbf{0},$  $\textbf{y}(1,t)\geq \mathbf{0}$
on $[0,T].$ Then $\textbf{L}_{\varepsilon}\textbf{y}\geq \mathbf{0}$
in $D$ implies $\textbf{y}\geqslant \mathbf{0}$ on $\overline{D}.$
\end{lemma}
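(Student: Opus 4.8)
The plan is to argue by contradiction using a pointwise minimum argument that exploits the sign structure of the coupling matrix $\textbf{\emph{A}}$. Suppose the conclusion fails, so that $\min_{1\le i\le M}\min_{(x,t)\in\overline{D}} y_i(x,t)<0$, and let this minimum be attained in the $k$-th component at a point $(x^*,t^*)$, i.e. $y_k(x^*,t^*)=\min_{i}\min_{\overline{D}} y_i<0$. Since $\textbf{\emph{y}}\ge\mathbf{0}$ holds on the initial line $\Gamma_0$ and on the lateral boundary $\Gamma_1$, the negative minimum cannot lie on $\Gamma$, so $(x^*,t^*)$ must satisfy $x^*\in\Omega=(0,1)$ and $t^*\in(0,T]$.

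At this point I would record the standard consequences of $(x^*,t^*)$ being a minimizer: because $x^*$ is an interior spatial point we have $\partial_{xx}y_k(x^*,t^*)\ge 0$, hence $-\varepsilon\,\partial_{xx}y_k(x^*,t^*)\le 0$; and because $y_k(x^*,t)\ge y_k(x^*,t^*)$ for $t\le t^*$ we obtain the one-sided bound $\partial_t y_k(x^*,t^*)\le 0$ (this covers both $t^*<T$, where the time derivative vanishes, and $t^*=T$, where only the backward difference quotient is controlled).

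The heart of the argument is the reaction term $\sum_{j=1}^M a_{kj}(x^*)\,y_j(x^*,t^*)$. Since $y_k(x^*,t^*)$ is the global minimum over all components, $y_j(x^*,t^*)\ge y_k(x^*,t^*)$ for every $j$; multiplying by the nonpositive off-diagonal entries $a_{kj}\le 0$ $(j\ne k)$ reverses the inequality, giving $a_{kj}\,y_j(x^*,t^*)\le a_{kj}\,y_k(x^*,t^*)$. Summing and using $a_{kk}>0$ yields
\[
\sum_{j=1}^M a_{kj}\,y_j(x^*,t^*)\le\Big(\sum_{j=1}^M a_{kj}\Big)\,y_k(x^*,t^*)\le \beta^*\,y_k(x^*,t^*)<0,
\]
where the last two steps use the row-sum condition $\sum_{j=1}^M a_{kj}\ge\beta^*>0$ together with $y_k(x^*,t^*)<0$. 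Combining the three estimates, the $k$-th component of $\textbf{\emph{L}}_{\varepsilon}\textbf{\emph{y}}$ evaluated at $(x^*,t^*)$ is strictly negative, contradicting the hypothesis $\textbf{\emph{L}}_{\varepsilon}\textbf{\emph{y}}\ge\mathbf{0}$ in $D$. Hence the minimum is nonnegative and $\textbf{\emph{y}}\geqslant\mathbf{0}$ on $\overline{D}$.

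I expect the only delicate point to be the treatment of a minimizer located on the top face $t=T$: there one cannot assert $\partial_t y_k=0$, only $\partial_t y_k\le 0$, so the argument must be phrased with one-sided time derivatives (equivalently, one may perturb by the standard barrier $y_k+\delta t$ and let $\delta\to 0^+$ to bypass the time-boundary subtlety). The essential feature distinguishing this from the scalar maximum principle is the coupling through the off-diagonal sign conditions: it is precisely the simultaneous minimization over components and over $\overline{D}$ that makes the inequality $a_{kj}\,y_j\le a_{kj}\,y_k$ available and lets the row-sum lower bound $\beta^*$ close the argument.
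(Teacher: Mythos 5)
Your proof is correct, and it is essentially the argument the paper invokes by reference: the paper gives no proof of its own, merely citing the technique of Theorem~1 of Gracia and Lisbona, which is exactly this negative-minimum argument over all components combined with the sign conditions $a_{kj}\le 0$ ($j\ne k$) and the row-sum bound $\sum_j a_{kj}\ge\beta^*>0$. Your handling of the top face $t=T$ via the one-sided bound $\partial_t y_k\le 0$ is sound, and in fact the strict negativity supplied by the reaction term $\beta^* y_k(x^*,t^*)<0$ already closes the contradiction without needing the $\delta t$ perturbation.
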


An immediate consequence of Lemma 2.1 is the following stability result.
\begin{lemma}\label{tslem:pus}
Let  $\textbf{u}$ be the solution of  (1)-(3). Then
\[||\textbf{u}||_{\overline{D}}\leqslant \frac{1}{\beta^*}||\textbf{f}\,||_{\overline{D}}.\]
\end{lemma}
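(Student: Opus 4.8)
The plan is to derive the stability bound directly from the maximum principle in Lemma~\ref{tslem:cmp} by constructing suitable barrier (comparison) functions that dominate $\textbf{\emph{u}}$ from above and below. The key observation is that the constant vector $\frac{1}{\beta^*}||\textbf{\emph{f}}\,||_{\overline{D}}\,\textbf{\emph{C}}$, where $\textbf{\emph{C}}=(1,\dots,1)^T$, serves as a natural candidate barrier because the positivity conditions \eqref{eq:assump1}--\eqref{eq:assump2} are precisely what is needed to control the reaction term $\textbf{\emph{A}}$ acting on a constant vector.

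First I would define the pair of comparison functions
\[
\boldsymbol{\psi}^{\pm}(x,t)=\frac{1}{\beta^*}||\textbf{\emph{f}}\,||_{\overline{D}}\,\textbf{\emph{C}}\pm\textbf{\emph{u}}(x,t),
\]
and verify that each satisfies the hypotheses of Lemma~\ref{tslem:cmp}. On the boundary $\Gamma$ this is immediate: since $\textbf{\emph{u}}$ vanishes on $\Gamma_0$ and $\Gamma_1$ by the boundary and initial conditions \eqref{a2}--\eqref{a3}, we have $\boldsymbol{\psi}^{\pm}=\frac{1}{\beta^*}||\textbf{\emph{f}}\,||_{\overline{D}}\,\textbf{\emph{C}}\geq\mathbf{0}$ there. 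Next I would compute $\textbf{\emph{L}}_{\varepsilon}\boldsymbol{\psi}^{\pm}$ in $D$. Since the constant vector is annihilated by the time derivative and by the second-order spatial operator, applying $\textbf{\emph{L}}_{\varepsilon}$ to the constant term leaves only $\textbf{\emph{A}}\cdot\frac{1}{\beta^*}||\textbf{\emph{f}}\,||_{\overline{D}}\,\textbf{\emph{C}}$, whose $i$-th component equals $\frac{1}{\beta^*}||\textbf{\emph{f}}\,||_{\overline{D}}\sum_{j=1}^{M}a_{ij}$.

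The central estimate is then that this $i$-th component is bounded below by $||\textbf{\emph{f}}\,||_{\overline{D}}$, which follows directly from the row-sum condition $\sum_{j=1}^{M}a_{ij}\geq\beta^*$ in \eqref{eq:assump2}. Combining with $\textbf{\emph{L}}_{\varepsilon}(\pm\textbf{\emph{u}})=\pm\textbf{\emph{f}}$ from \eqref{a1}, each component of $\textbf{\emph{L}}_{\varepsilon}\boldsymbol{\psi}^{\pm}$ is at least $||\textbf{\emph{f}}\,||_{\overline{D}}\pm f_i\geq 0$, so $\textbf{\emph{L}}_{\varepsilon}\boldsymbol{\psi}^{\pm}\geq\mathbf{0}$ in $D$. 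Lemma~\ref{tslem:cmp} then yields $\boldsymbol{\psi}^{\pm}\geq\mathbf{0}$ on $\overline{D}$, that is $|\textbf{\emph{u}}|\leq\frac{1}{\beta^*}||\textbf{\emph{f}}\,||_{\overline{D}}\,\textbf{\emph{C}}$ componentwise, which is exactly the asserted bound after taking the maximum norm.

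I do not anticipate a genuine obstacle here, as the argument is a standard barrier-function application; the only point requiring care is the sign bookkeeping when handling $\boldsymbol{\psi}^{+}$ and $\boldsymbol{\psi}^{-}$ simultaneously, and ensuring that the off-diagonal sign condition \eqref{eq:assump1} is not secretly needed (it is already built into the validity of Lemma~\ref{tslem:cmp}). The essential input is simply that the minimal row sum $\beta^*$ lets the constant barrier absorb the forcing $\textbf{\emph{f}}$ uniformly in $\varepsilon$, which is why the resulting bound is parameter-robust.
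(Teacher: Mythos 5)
Your proof is correct and is exactly the standard barrier-function argument the paper has in mind: the paper gives no explicit proof, stating the lemma as an ``immediate consequence'' of the maximum principle in Lemma~2.1, and your comparison functions $\boldsymbol{\psi}^{\pm}=\frac{1}{\beta^*}||\textbf{\emph{f}}\,||_{\overline{D}}\,\textbf{\emph{C}}\pm\textbf{\emph{u}}$ together with the row-sum condition \eqref{eq:assump2} are precisely how that consequence is realized. No gaps.
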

To obtain the bounds on the solution $\textbf{\emph{u}}$ of (1)-(3),
the variable $x$ is transformed to the stretched variable
$\widetilde{x}$ ~defined by $\widetilde{x}=x/\sqrt{\varepsilon}$,
this results that $(1)$-(3) transformed as

\begin{equation}\label{2.3a}
\widetilde{\textbf{\emph{L}}}_{\varepsilon}\widetilde{\textbf{\emph{u}}}:=\frac{\partial
\widetilde{\textbf{\emph{u}}}}{\partial
t}+\widetilde{\textbf{\emph{L}}}_{\widetilde{x},\varepsilon}\widetilde{\textbf{\emph{u}}}=\widetilde{\textbf{\emph{f}}},~~~~~~(\widetilde{x},t) \in~\widetilde{D}_{\varepsilon},
\end{equation}

\begin{equation}\label{2.3b}
\widetilde{\textbf{\emph{u}}}(\widetilde{x},t)=0,~~(\widetilde{x},t)\in\widetilde{\Gamma}_{\varepsilon},~~~~~~~~~~~~~~~~~~~~~~
\end{equation}
 where
$$ \widetilde{\textbf{\emph{L}}}_{\widetilde{x},\varepsilon}=\left(%
\begin{array}{cccc}
 -\frac{\partial^2 }{\partial \widetilde{x}^2} &    & 0 \\
     & \ddots &  \\
   0 &  & -\frac{\partial^2 }{\partial \widetilde{x}^2}\\
\end{array}%
\right) + \widetilde{\textbf{\emph{A}}}, ~\mbox{with}~\widetilde{\textbf{\emph{A}}}=\left(%
\begin{array}{cccc}
  \widetilde{a}_{11}(\widetilde{x}) & ...&\widetilde{a}_{1M}(\widetilde{x}) \\
 \vdots  & \ddots & \vdots\\
 \widetilde{a}_{M1}(\widetilde{x}) & ...& \widetilde{a}_{MM}(\widetilde{x}) \\
\end{array}%
\right),$$
$\widetilde{D}_{\varepsilon}=\widetilde{\Omega}_{\varepsilon}\times(0,T]=(0,1/\sqrt{\varepsilon})\times
(0,T]$ and $\widetilde{\Gamma}_{\varepsilon}$ is its boundary
analogous to $\Gamma$. Here the differential equation   $(6)$ is
independent of $\varepsilon$. Using the standard local estimate for
the solution of system of time dependent partial differential
equations (see \cite{lady}),  we obtain the bounds on the solution
of $(6)$-$(7)$ and its derivative. On returning  in term to the
original variable $x$  and using $||u||_{\overline{D}}\leqslant C$,  obtained from 
Lemma 2.2 with  $\varepsilon$-uniform boundedness of
$\textbf{\emph{f}}$, yields the following result.

\begin{lemma}Let $\textbf{u}$ be the solution of (1)-(3).
Then it satisfies
$$\left\|\frac{\partial^{i+j}\textbf{u}}{\partial x^i \partial t^j}\right\|_{\overline{D}}\leqslant C \varepsilon^{-i/2}, ~~~\mbox{for}~~~0\leqslant i+2j\leqslant 6.\\$$
\end{lemma}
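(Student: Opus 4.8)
The plan is to prove the derivative bound
$$\left\|\frac{\partial^{i+j}\textbf{\emph{u}}}{\partial x^i \partial t^j}\right\|_{\overline{D}}\leqslant C \varepsilon^{-i/2},\qquad 0\leqslant i+2j\leqslant 6,$$
by exploiting the $\varepsilon$-independence of the stretched problem \eqref{2.3a}--\eqref{2.3b}, exactly as the text preceding the statement suggests. First I would work on the transformed system in the variable $\widetilde{x}=x/\sqrt{\varepsilon}$: since the operator $\widetilde{\textbf{\emph{L}}}_{\widetilde{x},\varepsilon}$ has the diffusion coefficient normalized to $1$ and the reaction matrix $\widetilde{\textbf{\emph{A}}}(\widetilde{x})=\textbf{\emph{A}}(\sqrt{\varepsilon}\,\widetilde{x})$ inherits the positivity assumptions \eqref{eq:assump1}--\eqref{eq:assump2} uniformly in $\varepsilon$, the transformed problem \eqref{2.3a}--\eqref{2.3b} is a parabolic reaction-diffusion system with all data bounded independently of $\varepsilon$. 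The key input is then the stability bound from Lemma~\ref{tslem:pus}, which gives $\|\textbf{\emph{u}}\|_{\overline{D}}=\|\widetilde{\textbf{\emph{u}}}\|_{\overline{\widetilde{D}}_\varepsilon}\leqslant C$ with $C$ independent of $\varepsilon$ (using the $\varepsilon$-uniform boundedness of $\textbf{\emph{f}}$).

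Next I would invoke the classical interior-and-boundary local a priori (Schauder-type) estimates for solutions of linear parabolic systems, as found in Ladyzhenskaya et al.\ \cite{lady}. Applied on unit-sized subregions of $\widetilde{D}_\varepsilon$, these estimates control the $C^{6,3}$ norm of $\widetilde{\textbf{\emph{u}}}$ in terms of the sup-norm of $\widetilde{\textbf{\emph{u}}}$, the norms of $\widetilde{\textbf{\emph{f}}}$ and of the coefficients $\widetilde{a}_{ij}$, and the compatibility conditions on $\widetilde{\Gamma}_\varepsilon$ assumed in the statement. Because every one of these quantities is bounded uniformly in $\varepsilon$ after the stretching, one obtains
$$\left\|\frac{\partial^{i+j}\widetilde{\textbf{\emph{u}}}}{\partial \widetilde{x}^{\,i}\partial t^{j}}\right\|_{\overline{\widetilde{D}}_\varepsilon}\leqslant C,\qquad 0\leqslant i+2j\leqslant 6,$$
with $C$ independent of $\varepsilon$. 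The uniform size of the covering subregions is what makes the bound hold up to the boundary $\widetilde{x}=0$ and $\widetilde{x}=1/\sqrt{\varepsilon}$, which is why the stretched formulation is essential.

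Finally I would undo the change of variables. Since $\partial/\partial x = \varepsilon^{-1/2}\,\partial/\partial\widetilde{x}$ by the chain rule while the $t$-derivatives are unaffected, each spatial derivative of order $i$ picks up exactly a factor $\varepsilon^{-i/2}$:
$$\frac{\partial^{i+j}\textbf{\emph{u}}}{\partial x^{i}\partial t^{j}}(x,t)=\varepsilon^{-i/2}\,\frac{\partial^{i+j}\widetilde{\textbf{\emph{u}}}}{\partial \widetilde{x}^{\,i}\partial t^{j}}(\widetilde{x},t),$$
and taking sup-norms over $\overline{D}$ gives the claimed bound for all $i+2j\leqslant 6$. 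I expect the main obstacle to be the careful justification of the local parabolic estimates \emph{uniformly up to the lateral and parabolic boundary}: the constant in the Schauder estimate must be shown to depend only on ellipticity, on $\varepsilon$-independent bounds for the coefficients and right-hand side, and on the stated compatibility conditions $\partial^{s+q}\textbf{\emph{f}}/\partial x^s\partial t^q=\mathbf{0}$ at the corners for $0\leqslant s+2q\leqslant 4$ — not on the (unbounded as $\varepsilon\to0$) length $1/\sqrt{\varepsilon}$ of the stretched spatial domain. Covering $\widetilde{\Omega}_\varepsilon$ by overlapping unit intervals and applying the local estimates on each, with the compatibility conditions ensuring regularity at the initial corners, resolves this, and the rest is the routine chain-rule bookkeeping indicated above.
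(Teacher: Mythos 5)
Your proposal is correct and follows essentially the same route as the paper: the paper likewise passes to the stretched variable $\widetilde{x}=x/\sqrt{\varepsilon}$, observes that the transformed system is $\varepsilon$-independent, applies the standard local estimates from Ladyzhenskaya et al.\ together with the sup-norm bound from Lemma~2.2, and returns to the original variable to collect the factor $\varepsilon^{-i/2}$. Your additional remarks on covering the stretched domain by unit-sized subregions and on the role of the compatibility conditions simply make explicit what the paper leaves implicit.
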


In the following result, we derive sharper bounds on the derivatives
of $\textbf{\emph{u}}$ to show that the large values seen in Lemma
2.3 do in fact decay rapidly as one moves away from the boundary
$\Gamma$.
\begin{lemma}\label{tslem:bnd2}
Let $\textbf{u}$ be the solution of (1)-(3).  Let
$\beta\in(0,\beta^*)$ be arbitrary but has a fixed value. Then there exists a
constant $\textbf{C}$, independent of $\varepsilon$, such that
\begin{equation}\label{tseq:bnd3}
\left|  \frac{\partial^m \textbf{u}(x,t)}{\partial x^m}\right|\leqslant
\textbf{C}\left(1+\varepsilon^{-m/2}(\exp(-x\sqrt{\beta/\varepsilon})+\exp(-(1-x)\sqrt{\beta/\varepsilon}))\right)
\end{equation}
for $(x,t)\in\overline{D}$ and $m=0,\dots,6.$
\end{lemma}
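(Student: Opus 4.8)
The plan is to split the two effects contributing to the bound: the algebraic factor $\varepsilon^{-m/2}$, which is already furnished \emph{globally} by Lemma 2.3, and the exponential decay away from the boundaries $x=0$ and $x=1$, which is the only genuinely new information. To isolate the decay I would use a Shishkin-type decomposition $\mathbf{u}=\mathbf{v}+\mathbf{w}$ into a smooth (regular) part $\mathbf{v}$ and a boundary-layer part $\mathbf{w}$, bound every spatial derivative of $\mathbf{v}$ by a constant independent of $\varepsilon$, and bound those of $\mathbf{w}$ by the exponential terms carrying the $\varepsilon^{-m/2}$ factor. The case $m=0$ needs nothing new since $\|\mathbf{u}\|_{\overline{D}}\leqslant C$ by Lemma 2.2, so I focus on $1\leqslant m\leqslant 6$.

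First I would build $\mathbf{v}$ from a truncated asymptotic expansion $\mathbf{v}=\sum_{k=0}^{K}\varepsilon^{k}\mathbf{v}_{k}$, with $K=3$ large enough for the purpose, where $\mathbf{v}_{0}$ solves the reduced parabolic system $\partial_{t}\mathbf{v}_{0}+\mathbf{A}\mathbf{v}_{0}=\mathbf{f}$ and each correction solves $\partial_{t}\mathbf{v}_{k}+\mathbf{A}\mathbf{v}_{k}=\partial_{x}^{2}\mathbf{v}_{k-1}$, all equipped with the data of (1)--(3). Since none of these subproblems contains the diffusion term, the assumed regularity and the corner compatibility of $\mathbf{f}$ imposed after (3) transfer to each $\mathbf{v}_{k}$ and give $\|\partial_{x}^{m}\mathbf{v}\|_{\overline{D}}\leqslant C$ for $0\leqslant m\leqslant 6$, independently of $\varepsilon$. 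This delivers the ``$1$'' term of the stated estimate for the regular part.

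For the layer part $\mathbf{w}=\mathbf{u}-\mathbf{v}$ I would argue in two stages. By construction $\mathbf{L}_{\varepsilon}\mathbf{w}=\mathbf{f}-\mathbf{L}_{\varepsilon}\mathbf{v}=O(\varepsilon^{K+1})$ in $D$, while on $\Gamma_{1}$ the values $\mathbf{w}=-\mathbf{v}$ are $O(1)$ and $\mathbf{w}=\mathbf{0}$ on $\Gamma_{0}$. Taking the barrier $\mathbf{B}(x)=\mathbf{C}\bigl(\varepsilon^{K+1}+\exp(-x\sqrt{\beta/\varepsilon})+\exp(-(1-x)\sqrt{\beta/\varepsilon})\bigr)$ and using $-\varepsilon \mathbf{B}''=-\beta(\mathbf{B}-\mathbf{C}\varepsilon^{K+1})$ together with the row-sum condition (5), one checks componentwise that $\mathbf{L}_{\varepsilon}\mathbf{B}\geqslant(\beta^{*}-\beta)\,\mathbf{B}\geqslant\mathbf{0}$ dominates the remainder, while $\mathbf{B}\geqslant|\mathbf{w}|$ on $\Gamma$ for $C$ large; the vector maximum principle of Lemma 2.1 applied to $\mathbf{B}\pm\mathbf{w}$ then yields the sup-norm decay $|\mathbf{w}(x,t)|\leqslant\mathbf{B}(x)$. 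To upgrade this to derivatives I would localise near each boundary in the stretched variable $\widetilde{x}=x/\sqrt{\varepsilon}$, in which $\mathbf{w}$ satisfies the $\varepsilon$-independent system (6); applying the same interior parabolic estimates used to prove Lemma 2.3, now on $\widetilde{x}$-intervals of fixed length and fed with the exponential sup-norm bound just obtained, gives $|\partial_{\widetilde{x}}^{m}\mathbf{w}|\leqslant C\exp(-\widetilde{x}\sqrt{\beta})$, and undoing the scaling via $\partial_{x}^{m}=\varepsilon^{-m/2}\partial_{\widetilde{x}}^{m}$ produces precisely the exponential-times-$\varepsilon^{-m/2}$ term near $x=0$, with the symmetric argument near $x=1$. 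Adding the bounds for $\mathbf{v}$ and $\mathbf{w}$ closes the estimate.

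The main obstacle is the junction between the sup-norm decay of $\mathbf{w}$ and pointwise control of its derivatives: a local parabolic estimate over a fixed-length stretched interval only sees the supremum of $|\mathbf{w}|$ over that interval, which degrades the decay rate by a bounded factor. I would circumvent this in the usual way, by first establishing the sup-norm decay at a slightly faster rate $\sqrt{\beta'/\varepsilon}$ with $\beta<\beta'<\beta^{*}$ (permissible because $\beta$ is an arbitrary fixed value in $(0,\beta^{*})$) and only then extracting derivatives, so that the final rate recovers the required $\sqrt{\beta/\varepsilon}$. The secondary technical requirement is that $\mathbf{v}$ be smooth enough for its remainder to be genuinely $O(\varepsilon^{K+1})$ in the relevant $C^{m}$ norm; this is exactly what the corner-compatibility hypotheses on $\mathbf{f}$ after (3) guarantee, and it is the reason they are assumed.
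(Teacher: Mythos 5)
Your proposal is correct in outline but takes a genuinely different, and considerably heavier, route than the paper. The paper proves the lemma by a short induction on the order of the derivative: setting $\textbf{\emph{y}}=\partial_x^{\nu}\textbf{\emph{u}}$ and differentiating the system $\nu$ times gives $\partial_t\textbf{\emph{y}}-\varepsilon\partial_x^2\textbf{\emph{y}}+\textbf{\emph{A}}\textbf{\emph{y}}=\boldsymbol{\Psi}_\nu$, where $\boldsymbol{\Psi}_\nu$ collects $\partial_x^\nu\textbf{\emph{f}}$ and lower-order terms $\textbf{\emph{A}}^{(\nu-\ell)}\partial_x^{\ell}\textbf{\emph{u}}$ already controlled by the inductive hypothesis, the boundary values are $O(\varepsilon^{-\nu/2})$ by Lemma 2.3 and the initial data vanish; a single application of the maximum principle with the barrier $\textbf{\emph{C}}P_\nu(x)$, $P_\nu(x)=1+\varepsilon^{-\nu/2}(e^{-x\sqrt{\beta/\varepsilon}}+e^{-(1-x)\sqrt{\beta/\varepsilon}})$, closes the induction. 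No decomposition is used at all; the paper's decomposition (9) is constructed \emph{afterwards}, as a consequence of this lemma via Taylor truncation at $x^{*}$, whereas you reverse that logical order and use a decomposition as the proof device.

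Your route (asymptotic expansion for $\textbf{\emph{v}}$, barrier for $\|\textbf{\emph{w}}\|$, stretched-variable local estimates for derivatives of $\textbf{\emph{w}}$) is the classical alternative and your barrier computation for $\textbf{\emph{B}}$ is correct, but two points deserve flagging. First, and most seriously, the expansion $\textbf{\emph{v}}=\sum_{k=0}^{K}\varepsilon^{k}\textbf{\emph{v}}_{k}$ with $\partial_t\textbf{\emph{v}}_k+\textbf{\emph{A}}\textbf{\emph{v}}_k=\partial_x^2\textbf{\emph{v}}_{k-1}$ loses two $x$-derivatives per correction: bounding $\partial_x^{6}\textbf{\emph{v}}_{K}$ requires roughly $\partial_x^{6+2K}$ of $\textbf{\emph{f}}$ and $\textbf{\emph{A}}$ (order $12$ for your $K=3$, and $K\geq 2$ is genuinely needed so that $\varepsilon^{K+1-m/2}$ stays bounded at $m=6$), which goes well beyond the $C^{6,3}$ regularity and order-$4$ compatibility the paper assumes; the paper's induction works under exactly the stated hypotheses. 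Second, recovering derivative bounds for $\textbf{\emph{w}}$ from its sup-norm decay requires boundary (not merely interior) parabolic estimates near $\widetilde{x}=0$ together with control of H\"older norms of the $O(\varepsilon^{K+1})$ remainder, which is where the real technical work hides; on the other hand your $\beta'>\beta$ device is unnecessary, since a window of fixed length in $\widetilde{x}$ only costs a multiplicative constant $e^{\sqrt{\beta}}$ and does not degrade the decay rate. In short, the approach can be made to work under strengthened smoothness hypotheses, but the paper's induction-plus-barrier argument is both shorter and sharper in its regularity demands.
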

\begin{proof} Fix $\beta\in(0,\beta^*)$ and  set
$P_m(x)=1+\varepsilon^{-m/2}(\exp(-x\sqrt{\beta/\varepsilon})+\exp(-(1-x)\sqrt{\beta/\varepsilon})).$
The proof is by mathematical induction.
 The bound (\ref{tseq:bnd3}) for $m=0$ follows from Lemma 2.2. Assume that (\ref{tseq:bnd3}) holds for
 $m=0,\dots,\nu-1,$ $1\leqslant \nu\leqslant 6.$ We now prove (\ref{tseq:bnd3})
 for $m=\nu.$ Letting $$\textbf{\emph{y}}=\frac{\partial^{\nu}\textbf{\emph{u}}}{\partial x^{\nu}},$$ note that
\begin{equation*}
\left\{%
\begin{array}{ll}
    \frac{\partial \textbf{\emph{y}}}{\partial t}-\textbf{\emph{E}} \frac{\partial^2\textbf{\emph{y}}}{\partial x^2}+\textbf{\emph{A}}\textbf{\emph{y}}=
\frac{\partial^\nu \textbf{\emph{f}}}{\partial x^{\nu}}-\sum_{l=0}^{\nu-1}\left(%
\begin{array}{c}
  \nu \\
  l\\
\end{array}%
\right) \textbf{\emph{A}}^{(\nu-\ell)} \frac{\partial^{\ell}\textbf{\emph{u}}}{\partial x^{\ell}}:=\boldsymbol{\Psi}_\nu & \mbox{in}~D, \\
    \textbf{\emph{y}}(x,0)=\mathbf{0} & \mbox{in}~ \overline{\Omega},\\
    ||\textbf{\emph{y}}(0,t)||\leqslant C\varepsilon^{-\nu/2}, ||\textbf{\emph{y}}(1,t)||\leqslant C\varepsilon^{-\nu/2} & \mbox{in}~(0,T], \\
\end{array}%
\right.
\end{equation*}
where boundary conditions follow from Lemma 2.3. From
the inductive hypothesis, it is clear that
$|\boldsymbol{\Psi}_\nu(x,t)|\leqslant \textbf{C}P_{\nu-1}(x).$ Applying the
maximum principle with the barrier function
$\textbf{\emph{C}}P_\nu(x),$ we obtain the required result, i.e.,  for $(x,t)\in\overline{D}$
\[\left| \frac{\partial^{\nu} \textbf{\emph{u}}(x,t)}{\partial x^{\nu}}\right|\leqslant
\textbf{\emph{C}}\left(1+\varepsilon^{-\nu/2}(\exp(-x\sqrt{\beta/\varepsilon})+\exp(-(1-x)\sqrt{\beta/\varepsilon}))\right).\]
\noindent This proves the lemma.
\end{proof}

Now a special decomposition of the exact solution $\textbf{\emph{u}}$
into a regular part $\textbf{\emph{v}}$ and a layer part
$\textbf{\emph{w}}$ can be obtain as follow. Set
$x^*=4\sqrt{\varepsilon / \beta } \ln(1/\sqrt{\varepsilon})$. Define for each $\jmath \in\{1,\dots,n\}$ and $(x,t)\in\overline{D}$
\begin{equation}
v_\jmath(x,t)=\left\{%
\begin{array}{ll}
   \displaystyle \sum_{\nu=0}^{4}\frac{(x-x^*)^\nu}{\nu!}\partial_x^{\nu}u_\jmath(x^*,t) &~ \mbox{for}~~ 0\leqslant x \leqslant x^*, t\in[0,T]; \\
    u_\jmath(x,t) &~ \mbox{for}~ x^*\leqslant x \leqslant 1-x^*, t\in[0,T];  \\
   \displaystyle \sum_{\nu=0}^{4}\frac{(x-x^*)^\nu}{\nu!}\partial_x^{\nu}u_\jmath(1-x^*,t)&~ \mbox{for}~ 1-x^*\leqslant x \leqslant 1, t\in[0,T], \\
\end{array}%
\right.
\end{equation}
and $w_\jmath(x,t)=u_\jmath(x,t)-v_\jmath(x,t)$. Then  Lemma
\ref{tslem:bnd2} and the choice of $x^*$ yields, for
$s=0,\dots,6$, (cf. Linss \cite{linss01})
\begin{equation*}\label{teq:rbond}
\left|\frac{\partial^sv_\jmath(x,t)}{\partial x^s} \right|\leqslant
C(1+\varepsilon^{2-s/2})~~~~~~~~~~~~ \tag{10a}
\end{equation*}
\begin{equation*}\label{teq:lbond}
\left|\frac{\partial^sw_\jmath(x,t)}{\partial x^s} \right|\leqslant
C\varepsilon^{-s/2}\left(\exp(-x\sqrt{\beta/\varepsilon})+\exp(-(1-x)\sqrt{\beta/\varepsilon})\right).~~~\tag{10b}
\end{equation*}
\setcounter{equation}{10} It should be noted here that this decomposition does not, in general,
satisfy $\textbf{\emph{L}}_{\varepsilon}\textbf{\emph{v}} =
\textbf{\emph{f}}$ and $\textbf{\emph{L}}_{\varepsilon}\textbf{\emph{w}}
=\mathbf{0}$.

\section{The time semidiscretization}

 We introduce the time semidiscretization of $(1)$-(3)  by using the classical Crank-Nicolson method,
 with  constant time step $\Delta t$ on  uniform mesh $\varpi=\{n\Delta t, 0\leqslant n \leqslant T/ \Delta t \}$. The time semidiscretization
 is given by 
\begin{equation}\label{3.1}
\left\{%
\begin{array}{ll}
\emph{\textbf{u}}^0=\emph{\textbf{u}}(x,0)=\textbf{0},\\
(I+\frac{\Delta t}{2}\textbf{\emph{ L}}_{x,\varepsilon})\textbf{\emph{u}}^{n+1}=(I-\frac{\Delta t}{2}  \textbf{\emph{L}}_{x,\varepsilon})\textbf{\emph{u}}^n+\frac{\Delta t}{2} (\textbf{\emph{f}}^n+\textbf{\emph{f}}^{n+1}),\\
\textbf{\emph{u}}^{n+1}(0)=\textbf{\emph{u}}^{n+1}(1)=\textbf{0},~\mbox{for}~~ n=0,1,\ldots, T/ \Delta t-1,
\end{array} \right.\
\end{equation}
where $\textbf{\emph{u}}^n$  is the   approximation of the exact
solution $\textbf{\emph{u}}$ of (1)-(3) at the time level
$t_n=n\Delta t, ~n=0,1,2,\ldots,T/\Delta t$ and
$\textbf{\emph{f}}^n=\textbf{\emph{f}}(x,t_n)$. \\

To study the consistency of (\ref{3.1}), we define the following
auxiliary  problem
\begin{equation}\label{3.3}
\left\{%
\begin{array}{ll}
\widehat{\textbf{\emph{L}}}_{x,\varepsilon} \widehat{\textbf{\emph{u}}}^{n+1} :=(I+\frac{\Delta t}{2} \textbf{\emph{L}}_{x,\varepsilon})\widehat{\textbf{\emph{u}}}^{n+1}=(I-\frac{\Delta t}{2}  \textbf{\emph{L}}_{x,\varepsilon})\textbf{\emph{u}}(x,t_n)+\frac{\Delta t}{2} (\textbf{\emph{f}}^n+\textbf{\emph{f}}^{n+1}),\\
~\widehat{\textbf{\emph{u}}}^{n+1}(0)=\widehat{\textbf{\emph{u}}}^{n+1}(1)=\textbf{0},
\end{array} \right.\
\end{equation}
 where
$\widehat{\textbf{\emph{u}}}^{n+1}$ is the approximation to
$\textbf{\emph{u}}(x,t_{n+1})$. Let
$\textbf{\emph{e}}_{n+1}(x)=\textbf{\emph{u}}(x,t_{n+1})-\widehat{\textbf{\emph{u}}}^{n+1}(x)$ be the local
truncation error of (\ref{3.1}) and it satisfies the following
lemma.

\begin{lemma}
If ~~~$$\left|\frac{\partial^i \textbf{u}(x,t)}{\partial t^i}\right|\leqslant \textbf{C},
~~~~(x,t)\in \overline{D}, ~~0\leqslant i \leqslant 3,$$ \noindent then the
local error associated to the scheme (\ref{3.1}) satisfies
$$|\textbf{e}_{n+1}(x)| \leqslant
\textbf{C}(\Delta t)^3,~~~x\in \overline{\Omega}.$$
\end{lemma}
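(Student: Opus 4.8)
The plan is to estimate the local truncation error $\textbf{\emph{e}}_{n+1}(x) = \textbf{\emph{u}}(x,t_{n+1}) - \widehat{\textbf{\emph{u}}}^{n+1}(x)$ by deriving a boundary-value problem that it satisfies and then applying the maximum principle (or a suitable elliptic stability estimate) to the operator $\widehat{\textbf{\emph{L}}}_{x,\varepsilon} = I + \frac{\Delta t}{2}\textbf{\emph{L}}_{x,\varepsilon}$. First I would apply the operator $\widehat{\textbf{\emph{L}}}_{x,\varepsilon}$ to $\textbf{\emph{e}}_{n+1}$. Since $\widehat{\textbf{\emph{u}}}^{n+1}$ solves (\ref{3.3}) exactly, this gives
\begin{equation*}
\widehat{\textbf{\emph{L}}}_{x,\varepsilon}\,\textbf{\emph{e}}_{n+1} = \Big(I+\tfrac{\Delta t}{2}\textbf{\emph{L}}_{x,\varepsilon}\Big)\textbf{\emph{u}}(x,t_{n+1}) - \Big(I-\tfrac{\Delta t}{2}\textbf{\emph{L}}_{x,\varepsilon}\Big)\textbf{\emph{u}}(x,t_{n}) - \tfrac{\Delta t}{2}\big(\textbf{\emph{f}}^n+\textbf{\emph{f}}^{n+1}\big),
\end{equation*}
with $\textbf{\emph{e}}_{n+1}(0)=\textbf{\emph{e}}_{n+1}(1)=\textbf{0}$ since both $\textbf{\emph{u}}(x,t_{n+1})$ and $\widehat{\textbf{\emph{u}}}^{n+1}$ vanish on the spatial boundary.

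The heart of the matter is to recognize the right-hand side as the Crank--Nicolson consistency residual for the continuous evolution equation (\ref{a1}). Using $\textbf{\emph{L}}_{x,\varepsilon}\textbf{\emph{u}} = \textbf{\emph{f}} - \partial_t\textbf{\emph{u}}$ from (\ref{a1}) evaluated at the two time levels $t_n$ and $t_{n+1}$, I would substitute to eliminate the spatial operator acting on $\textbf{\emph{u}}$. After collecting terms, the residual reduces to the familiar trapezoidal-rule remainder, namely a combination of the form
\begin{equation*}
\textbf{\emph{u}}(x,t_{n+1}) - \textbf{\emph{u}}(x,t_n) - \tfrac{\Delta t}{2}\big(\partial_t\textbf{\emph{u}}(x,t_{n+1}) + \partial_t\textbf{\emph{u}}(x,t_n)\big).
\end{equation*}
This is exactly the local error of the trapezoidal quadrature, and by Taylor expansion with integral remainder it is $O((\Delta t)^3)$ provided the third time derivative of $\textbf{\emph{u}}$ is bounded. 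The hypothesis $\big|\partial_t^i\textbf{\emph{u}}\big|\leqslant \textbf{C}$ for $0\leqslant i\leqslant 3$ is precisely what licenses this bound, giving $\big|\widehat{\textbf{\emph{L}}}_{x,\varepsilon}\,\textbf{\emph{e}}_{n+1}(x)\big|\leqslant \textbf{C}(\Delta t)^3$ componentwise. I must be careful that the Taylor remainder involves only time derivatives, so no negative powers of $\varepsilon$ enter and the constant is genuinely $\varepsilon$-uniform.

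The final step is a stability estimate for the operator $\widehat{\textbf{\emph{L}}}_{x,\varepsilon}$. Because $\textbf{\emph{A}}$ satisfies the positivity conditions (\ref{eq:assump1})--(\ref{eq:assump2}), the operator $I+\frac{\Delta t}{2}\textbf{\emph{L}}_{x,\varepsilon}$ inherits a maximum principle of the same type as Lemma \ref{tslem:cmp}: its reaction matrix is $I + \frac{\Delta t}{2}\textbf{\emph{A}}$, which remains diagonally dominant with nonpositive off-diagonals and row sums at least $1+\frac{\Delta t}{2}\beta^* > 1$. I would therefore use the barrier function $\textbf{\emph{C}}(\Delta t)^3$ and the homogeneous boundary data for $\textbf{\emph{e}}_{n+1}$ to conclude $|\textbf{\emph{e}}_{n+1}(x)|\leqslant \textbf{\emph{C}}(\Delta t)^3$, which is the claimed bound. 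The main obstacle I anticipate is the bookkeeping in the second paragraph: one has to substitute the PDE correctly at both time levels and verify that the spatial-operator contributions cancel cleanly, leaving only a pure time-quadrature remainder; getting the sign pattern right in the Crank--Nicolson combination is where an error would most easily creep in, but once the residual is identified as the trapezoidal remainder the rest is routine.
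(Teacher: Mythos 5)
Your argument is correct and is precisely the standard Crank--Nicolson consistency argument that the paper itself invokes only by citing \cite{clav05}: the residual $\widehat{\textbf{\emph{L}}}_{x,\varepsilon}\textbf{\emph{e}}_{n+1}$ reduces, after substituting $\textbf{\emph{L}}_{x,\varepsilon}\textbf{\emph{u}}=\textbf{\emph{f}}-\partial_t\textbf{\emph{u}}$ at both time levels, to the trapezoidal remainder, which the hypothesis on $\partial_t^3\textbf{\emph{u}}$ bounds by $\textbf{\emph{C}}(\Delta t)^3$, and the maximum principle for $I+\frac{\Delta t}{2}\textbf{\emph{L}}_{x,\varepsilon}$ with a constant barrier finishes the proof. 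You have simply written out in full the details the paper delegates to its reference, so this is the same approach, correctly executed.
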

\begin{proof}
The results follows from the arguments given in \cite{clav05}.
\end{proof}

Now we prove that  the asymptotic behavior of the solution of the
semidiscretize problem  (\ref{3.3}) and its spatial derivative have
essentially the same asymptotic behavior that the solution of a
stationary  system of  coupled singularly perturbed
reaction-diffusion problems. Using the approach Clavero et al.\cite{clav}, such feature is given by the following
lemma.\\

\begin{lemma}
Let $\widehat{\textbf{u}}^{n+1}$ be the solution of (\ref{3.3}).
Then it satisfies
\begin{equation}\label{3.4}
\left|\frac{d^{k}\widehat{\textbf{u}}^{n+1}}{d x^k }\right|\leqslant
\textbf{C}\left(1+\varepsilon^{-k/2}
(\exp(-x\sqrt{\beta/\varepsilon})+\exp(-(1-x)\sqrt{\beta/\varepsilon})\right),
\end{equation}
where $0\leqslant k\leqslant6$ and $\textbf{C}$ is a constant independent of
$\varepsilon$ and $\Delta t.$
\end{lemma}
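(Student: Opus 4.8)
The plan is to establish the bound in Lemma 3.3 by mathematical induction on the derivative order $k$, mirroring the structure used for the continuous solution in Lemma \ref{tslem:bnd2}, but now applied to the semidiscrete elliptic problem (\ref{3.3}). First I would verify that the operator $\widehat{\textbf{\emph{L}}}_{x,\varepsilon}=I+\frac{\Delta t}{2}\textbf{\emph{L}}_{x,\varepsilon}$ satisfies a discrete-in-time maximum principle analogous to Lemma \ref{tslem:cmp}; this follows from the positivity assumptions (\ref{eq:assump1})--(\ref{eq:assump2}) on $\textbf{\emph{A}}$ together with the fact that the identity contribution only strengthens diagonal dominance. The base case $k=0$ would be handled by a stability estimate for $\widehat{\textbf{\emph{L}}}_{x,\varepsilon}$ combined with the $\varepsilon$-uniform bound on $\textbf{\emph{u}}(x,t_n)$ coming from Lemma 2.2 and the bound on $\partial_t^i\textbf{\emph{u}}$ assumed in Lemma 3.2, giving $\|\widehat{\textbf{\emph{u}}}^{n+1}\|\leqslant\textbf{C}$.

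For the inductive step, I would differentiate (\ref{3.3}) $\nu$ times with respect to $x$ and set $\textbf{\emph{y}}=d^{\nu}\widehat{\textbf{\emph{u}}}^{n+1}/dx^{\nu}$. Applying the Leibniz rule to the term $\textbf{\emph{A}}\widehat{\textbf{\emph{u}}}^{n+1}$ produces a leading term $\textbf{\emph{A}}\textbf{\emph{y}}$ plus lower-order derivative terms $\textbf{\emph{A}}^{(\nu-\ell)}d^{\ell}\widehat{\textbf{\emph{u}}}^{n+1}/dx^{\ell}$ for $\ell<\nu$, exactly as in the proof of Lemma \ref{tslem:bnd2}. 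The right-hand side of (\ref{3.3}) also gets differentiated $\nu$ times; here one must control the derivatives of $(I-\frac{\Delta t}{2}\textbf{\emph{L}}_{x,\varepsilon})\textbf{\emph{u}}(x,t_n)$, which involve up to $\nu+2$ spatial derivatives of $\textbf{\emph{u}}(x,t_n)$ and thus are bounded using the layer estimate of Lemma \ref{tslem:bnd2}. By the inductive hypothesis the lower-order terms in $\textbf{\emph{y}}$ are dominated by $\textbf{\emph{C}}P_{\nu-1}(x)$, so the forcing term $\boldsymbol{\Psi}_\nu$ for the equation satisfied by $\textbf{\emph{y}}$ obeys $|\boldsymbol{\Psi}_\nu|\leqslant\textbf{\emph{C}}P_{\nu-1}(x)$; the boundary values of $\textbf{\emph{y}}$ are $O(\varepsilon^{-\nu/2})$, again from Lemma \ref{tslem:bnd2} applied at $x=0,1$. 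Invoking the discrete maximum principle with the barrier function $\textbf{\emph{C}}P_\nu(x)$ then closes the induction and yields (\ref{3.4}).

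The main obstacle I anticipate is the careful treatment of the factor $\frac{\Delta t}{2}\textbf{\emph{L}}_{x,\varepsilon}$ appearing on both sides of (\ref{3.3}), since the differentiated equation for $\textbf{\emph{y}}$ involves $-\frac{\Delta t}{2}\varepsilon\,d^2\textbf{\emph{y}}/dx^2$ coupled with the $\varepsilon$-independent identity; one must verify that the effective reaction coefficient (roughly $I+\frac{\Delta t}{2}\textbf{\emph{A}}$ divided by the diffusion scale) still gives the clean exponential decay rate $\sqrt{\beta/\varepsilon}$ uniformly in $\Delta t$, rather than a rate contaminated by $\Delta t$. This requires checking that the barrier function $P_\nu$ satisfies $\widehat{\textbf{\emph{L}}}_{x,\varepsilon}(\textbf{\emph{C}}P_\nu)\geqslant|\boldsymbol{\Psi}_\nu|$ with a constant $\textbf{\emph{C}}$ independent of both $\varepsilon$ and $\Delta t$, for which the choice $\beta<\beta^*$ provides the necessary slack, exactly as in the stationary case. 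Since the referenced technique of Clavero et al.\ \cite{clav} treats precisely this semidiscrete reaction-diffusion structure, the remaining verifications are routine once the barrier-function inequality is confirmed.
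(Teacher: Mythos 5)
Your overall strategy (maximum principle for $I+\tfrac{\Delta t}{2}\textbf{\emph{L}}_{x,\varepsilon}$, differentiate the semidiscrete equation, barrier function $\textbf{\emph{C}}P_\nu$) is the right skeleton and matches the paper for the orders it treats in detail, but there is a genuine gap in how you supply the boundary data for the differentiated problem. You claim the boundary values of $\textbf{\emph{y}}=d^{\nu}\widehat{\textbf{\emph{u}}}^{n+1}/dx^{\nu}$ at $x=0,1$ are $O(\varepsilon^{-\nu/2})$ ``from Lemma \ref{tslem:bnd2}.'' That lemma bounds derivatives of the \emph{exact} solution $\textbf{\emph{u}}$, not of the semidiscrete solution $\widehat{\textbf{\emph{u}}}^{n+1}$; the two differ by the local truncation error and there is no a priori control of spatial derivatives of that difference at this stage. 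Nor does your inductive hypothesis help, since it only controls derivatives of $\widehat{\textbf{\emph{u}}}^{n+1}$ up to order $\nu-1$. This is exactly the hole the paper's proof is built to fill: it introduces the auxiliary function $\phi^{n+1}=\tfrac{2}{\Delta t}(\widehat{\textbf{\emph{u}}}^{n+1}-\textbf{\emph{u}}(\cdot,t_n))$, shows $|\phi^{n+1}|\leqslant\textbf{\emph{C}}$ by the maximum principle, and then rewrites (\ref{3.3}) as a \emph{stationary} singularly perturbed system $\textbf{\emph{L}}_{x,\varepsilon}\widehat{\textbf{\emph{u}}}^{n+1}=(\text{uniformly bounded RHS})$, from which the crude estimates $\|d^2\widehat{\textbf{\emph{u}}}^{n+1}/dx^2\|\leqslant C\varepsilon^{-1}$ and $\|d\widehat{\textbf{\emph{u}}}^{n+1}/dx\|\leqslant C\varepsilon^{-1/2}$ follow; only then are the barrier-function arguments run with these as boundary data. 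Without some analogue of this step your induction cannot start the layer-sharpening argument.

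Two smaller points. First, your bound $|\boldsymbol{\Psi}_\nu|\leqslant\textbf{\emph{C}}P_{\nu-1}$ is not quite right: differentiating the right-hand side of (\ref{3.3}) $\nu$ times produces the term $\tfrac{\Delta t}{2}\varepsilon\,\partial_x^{\nu+2}\textbf{\emph{u}}(\cdot,t_n)$, which by Lemma \ref{tslem:bnd2} is of size $C\Delta t\,P_{\nu}$, not $CP_{\nu-1}$; this is still absorbable by the barrier because of the accompanying factor $\Delta t$ and the $\tfrac{\Delta t}{2}\textbf{\emph{L}}_{x,\varepsilon}P_\nu$ contribution, but it must be argued. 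Second, for $\nu=5,6$ that same term involves $\partial_x^{7}\textbf{\emph{u}}$ and $\partial_x^{8}\textbf{\emph{u}}$, which lie outside the range $m\leqslant 6$ of Lemma \ref{tslem:bnd2}, so the top orders cannot be closed by this direct differentiation alone; the paper defers them to the technique of Clavero et al., which you would need to invoke more substantively than as a final remark.
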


\begin{proof}
Let us first start by studying the behaviour of
$\widehat{\textbf{\emph{u}}}^{n+1}$, that means the result (\ref{3.4})  for $k=0.$
As the data $\textbf{\emph{f}}$ is $\varepsilon$-uniformly bounded,
$|\textbf{\emph{u}}(x,t_n)|\leqslant \textbf{\emph{C}}$ and
$|\textbf{\emph{L}}_{x,\varepsilon}\textbf{\emph{u}}(x,t_n)|\leqslant \textbf{\emph{C}}$;
similar to \cite{mad},  the operator $(I+\frac{\Delta
t}{2}\textbf{\emph{L}}_{x,\varepsilon})$ satisfies a maximum
principle and using this it follows that
$$|\widehat{\textbf{\emph{u}}}^{n+1}|\leqslant \textbf{\emph{C}}.$$

To prove the result (\ref{3.4}) for the derivatives of
$\widehat{\textbf{\emph{u}}}^{n+1}$, we introduce the following
auxiliary function
$$\mathbf{\phi}^{n+1}=\frac{2}{\Delta t}(\widehat{\textbf{\emph{u}}}^{n+1}(x)-\textbf{\emph{u}}(x,t_n)),$$
which is the solution of the following boundary value problem
\begin{equation}\label{3.5}
\left\{%
\begin{array}{ll}
(I+\frac{\Delta
t}{2}\textbf{\emph{L}}_{x,\varepsilon})\phi^{n+1}=-2\textbf{\emph{L}}_{x,\varepsilon}\textbf{\emph{u}}(x,t_n)+\textbf{\emph{f}}^n+\textbf{\emph{f}}^{n+1},\\
\\
\phi^{n+1}(0)=\textbf{0},~~~~~~~\phi^{n+1}(1)=\textbf{0}.
\end{array} \right.\
\end{equation}
Using
$|\textbf{\emph{L}}_{x,\varepsilon}\textbf{\emph{u}}(x,t_n)|=|\textbf{\emph{f}}(x,t_n)-\frac{\partial
\textbf{\emph{u}}}{\partial t}(x,t_n)|\leqslant \textbf{\emph{C}}$ and
$|\phi^{n+1}(0)|\leqslant \textbf{\emph{C}}, ~~|\phi^{n+1}(1)|\leqslant \textbf{\emph{C}} $ with the
maximum principle for $(I+\frac{\Delta
t}{2}\textbf{\emph{L}}_{x,\varepsilon})$  we get
$$|\phi^{n+1}|\leqslant \textbf{\emph{C}}.$$
Next we write the problem (\ref{3.3}) as
\begin{equation}\label{3.6}
\left\{%
\begin{array}{ll}
\textbf{\emph{L}}_{x,\varepsilon}\widehat{\textbf{\emph{u}}}^{n+1}=- \phi_1^{n+1}-\textbf{\emph{L}}_{x,\varepsilon}\textbf{\emph{u}}(x,t_n)+\textbf{\emph{f}}^n+\textbf{\emph{f}}^{n+1},\\
\\
\widehat{\textbf{\emph{u}}}^{n+1}(0)=\textbf{0},~~~~~~~\widehat{\textbf{\emph{u}}}^{n+1}(1)=\textbf{0}.
\end{array} \right.\
\end{equation}
From $|\phi_1^{n+1}|\leqslant \textbf{\emph{C}}$, it can be seen  that the right side of
(\ref{3.6}) is $\varepsilon$-uniformly bounded. Using this with
$|\widehat{\textbf{\emph{u}}}^{n+1}|\leqslant \textbf{\emph{C}}$ we get
\begin{equation}\label{3.7}
\left\|\frac{d^2\widehat{\textbf{\emph{u}}}^{n+1}}{dx^2}\right\|_{\overline{\Omega}}\leqslant
C  \varepsilon^{-1}, ~~~~~~~
\end{equation}
From (\ref{3.7}) and using the mean value theorem argument as used
in \cite{mad}, we  obtain
\begin{equation}\label{3.8}
\left\|\frac{d\widehat{\textbf{\emph{u}}}^{n+1}}{dx}\right\|_{\overline{\Omega}}\leqslant
C \varepsilon^{-1/2}. ~~~~~~~
\end{equation}
On differentiating (\ref{3.3}) with respect to $x$, we define
$\zeta_i^{n+1}=\frac{d^i\widehat{\textbf{\emph{u}}}^{n+1}}{dx^i},
~i=1,2,$ are  the solutions of boundary value problems
\begin{equation}\label{3.9}
\left\{%
\begin{array}{ll}
(I+\frac{\Delta t}{2}\textbf{\emph{L}}_{x,\varepsilon})\zeta_i^{n+1}=\textbf{\emph{g}}_i(x),\\
\\
\zeta_i^{n+1}(0)=\textbf{\emph{s}}_i^0,~~~~~~~\zeta_i^{n+1}(1)=\textbf{\emph{s}}_i^1,\\
\end{array} \right.\
\end{equation}
where $|\textbf{\emph{s}}_i^j|\leqslant \textbf{\emph{C}} \varepsilon^{-i/2},~~~i=1,2,
~j=0,1$ and using Lemma 2.4  $$|\textbf{\emph{g}}_i(x)| \leqslant \textbf{\emph{C}} (1+\varepsilon^{-i/2}
(\exp(-x\sqrt{\beta/\varepsilon})+\exp(-(1-x)\sqrt{\beta/\varepsilon}))),~i=1,2.$$  Now taking the barrier function as
$$\widetilde{\xi}(x)=\textbf{\emph{C}}_1(1+x)+\textbf{\emph{C}}_2
\varepsilon^{-i/2}(\exp(-x\sqrt{\beta/\varepsilon})+\exp(-(1-x)\sqrt{\beta/\varepsilon}))$$
and for sufficiently large value of $\textbf{\emph{C}}_1$ and $\textbf{\emph{C}}_2$ using the
maximum principle for $(I+\frac{\Delta
t}{2}\textbf{\emph{L}}_{x,\varepsilon})$, we deduce that (\ref{3.4})
is
true for $k=1,2.$\\

Now to prove the bound (\ref{3.4}) for higher value of $k$, we 
follow similar arguments given in \cite{clav}. This proves the lemma.
\end{proof}

Next we define the Shishkin-type decomposition for the solution of
semidiscretize problem (\ref{3.3}). This type of decomposition has
been discussed earlier in Linss \cite{linss01}, for scalar singularly
perturbed boundary value problem. To define this, let
$x^*=(4\sqrt{\varepsilon/ \beta}) \ln(1/\sqrt{\varepsilon})$. Similar to (9), for each
 $x\in \overline{\Omega}$ and $k=1,\ldots,M$, we set $\widehat{\emph{v}}^{n+1}_k=\widehat{\emph{u}}^{n+1}_k$
for $x\in [x^*,1-x^*]$ and $\widehat{\textbf{\emph{v}}}^{n+1}$
extends to a smooth function defined on $\overline{\Omega}$ and  define
$\widehat{\emph{w}}^{n+1}_k=\widehat{\emph{u}}^{n+1}_k-\widehat{\emph{v}}^{n+1}_k$
for $x\in \overline{\Omega}.$  Then the results of Lemma 3.2 and the
choice of $x^*$ implies the following
 decomposition of $\widehat{\textbf{\emph{u}}}^{n+1}$  (cf. Linss \cite{linss01})
\begin{lemma}
Let $\widehat{\textbf{u}}^{n+1}$ be the solution of (\ref{3.3}).
Then it can be represented as
$\widehat{\textbf{u}}^{n+1}=\widehat{\textbf{v}}^{n+1}+\widehat{\textbf{w}}^{n+1}$,
where the regular part $\widehat{\textbf{v}}^{n+1}$ satisfies
\begin{equation}\label{3.20}
\left|\frac{d^{m}\widehat{v}^{n+1}_k}{d x^m }\right|\leqslant
C(1+\varepsilon^{2-m/2}),
\end{equation}
and the layer part $\widehat{\textbf{w}}^{n+1}$ satisfies
\begin{equation}\label{3.21}
\left|\frac{d^{m}\widehat{w}^{n+1}_k}{d x^m}\right|\leqslant
C\varepsilon^{-m/2} \left(\exp(-x\sqrt{\beta/\varepsilon})+\exp(-(1-x)\sqrt{{\beta}/\varepsilon})\right),
\end{equation}
for ~$0\leqslant m\leqslant6$,~~ $k=1,\ldots, M, $~and
$C$ is a constant independent of $\varepsilon$ and $\Delta t$.
\end{lemma}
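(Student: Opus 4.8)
The plan is to follow the continuous-level argument that produced the decomposition (9)--(10), replacing the pointwise derivative bounds of Lemma 2.4 by their semidiscrete counterparts from Lemma 3.2, which are uniform in both $\varepsilon$ and $\Delta t$. Concretely, I would define the regular part exactly as in (9): set $\widehat{v}^{n+1}_k=\widehat{u}^{n+1}_k$ on the interior $[x^*,1-x^*]$ and extend it into the two layer regions $[0,x^*]$ and $[1-x^*,1]$ by the degree-four Taylor polynomial of $\widehat{u}^{n+1}_k$ centred at the transition points $x^*$ and $1-x^*$ respectively. The layer part is then $\widehat{w}^{n+1}_k=\widehat{u}^{n+1}_k-\widehat{v}^{n+1}_k$, which vanishes identically on $[x^*,1-x^*]$. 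Since the Taylor extension matches the value and the first four derivatives of $\widehat{u}^{n+1}_k$ at the transition points, the regular part is $C^4$ and $\widehat{w}^{n+1}_k$ has vanishing derivatives up to order four there, so the splitting is well defined.

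The single quantitative ingredient is the choice $x^*=4\sqrt{\varepsilon/\beta}\,\ln(1/\sqrt{\varepsilon})$, for which $\exp(-x^*\sqrt{\beta/\varepsilon})=\varepsilon^2$. For the regular bound (\ref{3.20}) I would argue region by region. On $[x^*,1-x^*]$ we have $\widehat{v}^{n+1}_k=\widehat{u}^{n+1}_k$, and since both $x\geqslant x^*$ and $1-x\geqslant x^*$ there, the two exponentials in Lemma 3.2 are each at most $\varepsilon^2$; hence $\varepsilon^{-m/2}(\exp(-x\sqrt{\beta/\varepsilon})+\exp(-(1-x)\sqrt{\beta/\varepsilon}))\leqslant 2\varepsilon^{2-m/2}$, which yields $C(1+\varepsilon^{2-m/2})$. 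On the layer regions $\widehat{v}^{n+1}_k$ is a degree-four polynomial whose coefficients are the derivatives of $\widehat{u}^{n+1}_k$ at $x^*$ (or $1-x^*$); applying the same exponential estimate at the transition point bounds the $\nu$-th coefficient by $C(1+\varepsilon^{2-\nu/2})$, and since $|x-x^*|\leqslant x^*=O(\sqrt{\varepsilon}\ln(1/\varepsilon))$ on these regions, each term of the differentiated polynomial is again $O(\varepsilon^{2-m/2})$ up to logarithmic factors absorbed into $C$; the fifth and sixth derivatives of a degree-four polynomial vanish, so (\ref{3.20}) holds for all $0\leqslant m\leqslant 6$.

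For the layer bound (\ref{3.21}) the interior is trivial, since $\widehat{w}^{n+1}_k\equiv 0$ on $[x^*,1-x^*]$. On the layer regions I would write $\widehat{w}^{n+1}_k=\widehat{u}^{n+1}_k-\widehat{v}^{n+1}_k$ and estimate the two pieces separately: the derivatives of $\widehat{u}^{n+1}_k$ are controlled directly by the dominant term $C\varepsilon^{-m/2}(\exp(-x\sqrt{\beta/\varepsilon})+\exp(-(1-x)\sqrt{\beta/\varepsilon}))$ of Lemma 3.2, while the polynomial derivatives of $\widehat{v}^{n+1}_k$ are bounded by the $\varepsilon^2$-controlled quantities of the previous step, which are readily absorbed into the layer exponential on $[0,x^*]\cup[1-x^*,1]$. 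This gives (\ref{3.21}).

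The main obstacle is not the decomposition itself, which is essentially the bookkeeping of the Taylor-extension argument of Linss \cite{linss01}, but the uniform-in-$\Delta t$ derivative bounds of Lemma 3.2 that feed it --- in particular the bounds for the higher derivatives ($k=3,\dots,6$), whose proof the excerpt defers to Clavero et al. \cite{clav}. Once those are in hand, the only care needed here is to keep every constant independent of $\Delta t$, which holds because each estimate used is inherited from Lemma 3.2 with $\Delta t$-independent constants, and because the transition point $x^*$ and the factor $\varepsilon^2=\exp(-x^*\sqrt{\beta/\varepsilon})$ depend only on $\varepsilon$ and $\beta$.
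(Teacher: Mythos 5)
Your construction is exactly the one the paper (following Linss) uses: freeze $\widehat{u}^{n+1}_k$ on $[x^*,1-x^*]$, extend by the degree-four Taylor polynomial at the transition points, and exploit $\exp(-x^*\sqrt{\beta/\varepsilon})=\varepsilon^2$. The regular-part bound (17) is handled correctly (in fact more easily than you suggest: for $\nu\leqslant 4$ the Taylor coefficients are already $O(1)$ and $(x^*)^{\nu-m}\leqslant 1$, so no logarithmic factors need absorbing). The paper itself gives no more detail than "Lemma 3.2 and the choice of $x^*$ implies the decomposition (cf.\ Linss)", so on the construction you are on the same track.

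There is, however, a genuine gap in your argument for the layer bound (18) when $0\leqslant m\leqslant 3$. Writing $B_\varepsilon(x)=\exp(-x\sqrt{\beta/\varepsilon})+\exp(-(1-x)\sqrt{\beta/\varepsilon})$, your claim that on $[0,x^*]$ the derivatives of $\widehat{u}^{n+1}_k$ are "controlled directly by the dominant term $C\varepsilon^{-m/2}B_\varepsilon$" and that the polynomial contributions are "readily absorbed into the layer exponential" requires $1\leqslant C\varepsilon^{-m/2}B_\varepsilon(x)$ throughout the layer region. Near $x=x^*$ one has $B_\varepsilon(x)\approx\varepsilon^{2}$, so $\varepsilon^{-m/2}B_\varepsilon(x)\approx\varepsilon^{2-m/2}\to 0$ for $m<4$, and the triangle inequality gives only $|\widehat{w}^{(m)}|\leqslant C$, not $C\varepsilon^{2-m/2}$; for $m=0$ this is off by a factor $\varepsilon^{-2}$. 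The correct argument uses that $\widehat{w}^{(j)}(x^*)=0$ for $j=0,\dots,4$ together with the integral form of the Taylor remainder,
\begin{equation*}
\widehat{w}^{(m)}_k(x)=\frac{1}{(4-m)!}\int_{x^*}^{x}(x-s)^{4-m}\,\widehat{u}^{(5)}_k(s)\,ds ,
\end{equation*}
then bounds $|\widehat{u}^{(5)}_k(s)|\leqslant C\varepsilon^{-5/2}\exp(-s\sqrt{\beta/\varepsilon})$ (valid on $[0,x^*]$ since there $\varepsilon^{-5/2}B_\varepsilon\geqslant\varepsilon^{-1/2}\geqslant 1$) and uses $\int_{x}^{x^*}(s-x)^{4-m}e^{-(s-x)\sqrt{\beta/\varepsilon}}\,ds\leqslant C\varepsilon^{(5-m)/2}$ to gain exactly the factor needed to land on $C\varepsilon^{-m/2}\exp(-x\sqrt{\beta/\varepsilon})$. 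For $m\geqslant 4$ your absorption argument does work, since then $\varepsilon^{-m/2}B_\varepsilon\geqslant\varepsilon^{2-m/2}\geqslant 1$ on the layer region. With this repair the proof is complete and, as you note, everything remains uniform in $\Delta t$ because all input bounds come from Lemma 3.2.
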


The above lemma shows that
 the solution
 $\widehat{\textbf{\emph{u}}}^{n+1}$  of (\ref{3.3}) is decomposed into a sum of  regular part $\widehat{\textbf{\emph{v}}}^{n+1}=(\widehat{v}^{n+1}_1,...,\widehat{v}^{n+1}_M)^T$ and  layer
part
$\widehat{\textbf{\emph{w}}}^{n+1}=(\widehat{w}^{n+1}_1,...,\widehat{w}^{n+1}_M)^T$,
that is, it can be written as
~$\widehat{\textbf{\emph{u}}}^{n+1}=\widehat{\textbf{\emph{v}}}^{n+1}+\widehat{\textbf{\emph{w}}}^{n+1}$.
This decomposition is said to be a Shishkin-type decomposition (not
a standard Shishkin decomposition) as it does not in general satisfy
$(I+\frac{\Delta
t}{2}\textbf{\emph{L}}_{x,\varepsilon})\widehat{\textbf{\emph{v}}}^{n+1}=(I-\frac{\Delta
t}{2}
\textbf{\emph{L}}_{x,\varepsilon})\textbf{\emph{v}}(x,t_n)+\frac{\Delta
t}{2} (\textbf{\emph{f}}^n+\textbf{\emph{f}}^{n+1})$ and
$(I+\frac{\Delta
t}{2}\textbf{\emph{L}}_{x,\varepsilon})\widehat{\textbf{\emph{w}}}^{n+1}=(I-\frac{\Delta
t}{2} \textbf{\emph{L}}_{x,\varepsilon})\textbf{\emph{w}}(x,t_n)$,
as these additional properties are not needed
in the error analysis of present method.

\section{The spatial semidiscretization}
In this section, first, we construct a
generalized Shishkin mesh $S(L)$ to discretized the domain
$\overline{\Omega}:=[0,1]$ by using a suitable mesh generating
function $\mathcal{K}$ as described in \cite{vu011}. Define the
transition parameter
$$\sigma=\mbox{min}\left\{~\frac{1}{4},~ \sigma_0\sqrt{\varepsilon}
L~\right\},$$ where $\sigma_0 (\geq 4/\sqrt{\beta})$ is a positive
constant and  $L=L(N)$ the value of $L$ with $N$ elements that
satisfy $\ln(\ln N)< L\leqslant \ln N$~~and
\begin{equation}\label{0}
e^{-L}\leqslant\frac{L}{N}.
\end{equation}

The mesh points of generalized-Shishkin discretized domain
$\overline{\Omega}_N^S$ are given by
$x_j=\mathcal{K}(j/N),\,j=0,1,\ldots,N/2,$ and by symmetry 
$x_{N-j}=1-x_j,\,j=0,1,...,N/2$, where $\mathcal{K}\in C^2[0,1/2]$ and defined as
\begin{equation}\label{4}
~~~~~~\mathcal{K}(t) = \left\{%
\begin{array}{ll}
  \displaystyle 4\sigma t,   ~~~~~~~~~~~~~~~~~~~~~~~~~~~~~~~~~~~~~~~~\mbox{for}~~ t\in [0,1/4]; \\
    [8pt]
   \displaystyle p(t-1/4)^3+4\sigma( t-1/4)+\sigma,  ~~~~~~\mbox{for}~~ t\in [1/4,1/2].
\end{array}%
\right.
\end{equation}
\noindent Here the  coefficient $p$ is determined
by  $\mathcal{K}(1/2)=1/2.$ \\

 Note that the mesh $\overline{\Omega}_N^S$ is uniform  in
$[0,\sigma]$ and $[1-\sigma, 1],$ and it changes smoothly in the
transition points $\{\sigma, 1-\sigma\}$. However, the mesh width
$h_j=x_{j+1}-x_j$, for $j=N/4,\ldots, 3N/4$,  satisfies (see
\cite{vu011})
\begin{equation}\label{4.a}
\begin{array}{lcl}
h_{j+1}\leqslant N^{-1} \displaystyle\max_{[(i-1)/N,(i+1)/N]}\mathcal{K}'(t)\leqslant C N^{-1}\\
h_{j+1}-h_j \leqslant N^{-2} \displaystyle\max_{[(i-1)/N,(i+1)/N]}\mathcal{K}^{''}(t)\leqslant C N^{-2}.
\end{array}
\end{equation}
We shall let
$h_{\max}=\displaystyle\max_{ \forall j}~ h_j, j=1,2,\ldots,N$, and by symmetry it is easy to verify that $h_{\max}=h_{N/2}=h_{N/2+1}$.

\subsection{The hybrid scheme}
We introduce a hybrid scheme  to discretize the set of stationary
coupled system of  singulary perturbed reaction-diffusion problem
(\ref{3.3}) on the generalized Shishkin mesh
$\overline{\Omega}_N^S$. The hybrid scheme is a combination of the
fourth order compact difference scheme (where the coefficients
$q_i^{k}$'s and $r_i^{k}$'s of the scheme are determined so that the
scheme is exact for the polynomials up to degree four ~and satisfy
the normalization conditions $q_i^{k,-}+q_i^{k,c}+ q_i^{k,+}=1, ~
i=1,2, \ldots, N-1,~ k=1,2, \ldots, M$ ) and the central difference
scheme, and is given
by
\begin{equation}\label{4.5a}
\hspace*{1.5cm}[\widehat{\textbf{\emph{L}}}^N_{x,\varepsilon}\widehat{\textbf{\emph{U}}}^{n+1}]_i=[\emph{\textbf{Q}}\widehat{\textbf{\emph{f}}}^n]_i,~~\mbox{for}~~i=1,2,\ldots,N-1,
\end{equation}
\begin{equation}\label{4.5b}
\hspace*{-1.85cm}\widehat{\textbf{\emph{U}}}^{n+1}_0=\textbf{0},~~~
\widehat{\textbf{\emph{U}}}^{n+1}_N=\textbf{0},~~~~~~~~~~~~~~~~~~
\end{equation}
 where
\begin{equation}\label{discoper}
[\widehat{\textbf{\emph{L}}}^N_{x,\varepsilon}\widehat{\textbf{\emph{U}}}^{n+1}]_i:=\left(%
\begin{array}{c}
  R(\widehat{U}^{n+1}_1)~+~\frac{\Delta t}{2}Q(a_{12}\widehat{U}^{n+1}_2)+...+\frac{\Delta t}{2}Q(a_{1M}\widehat{U}^{n+1}_M)\\

R(\widehat{U}^{n+1}_2)~+~\frac{\Delta
t}{2}Q(a_{21}\widehat{U}^{n+1}_1)+...+\frac{\Delta
t}{2}Q(a_{2M}\widehat{U}^{n+1}_M)\\
\vdots\\
R(\widehat{U}^{n+1}_M)~+~\frac{\Delta
t}{2}Q(a_{M1}\widehat{U}^{n+1}_1)+...+\frac{\Delta
t}{2}Q(a_{MM-1}\widehat{U}^{n+1}_{M-1})
\end{array}%
\right)_i,
\end{equation}

\begin{equation}
[\emph{\textbf{Q}}\widehat{\textbf{\emph{f}}}^n]_i:=\left(%
\begin{array}{c}
  Q({\widehat{f}}^n_1) \\

  Q({\widehat{f}}^n_2) \\
  \vdots
  \\
  Q({\widehat{f}}^n_M)

\end{array}%
\right)_i,
\end{equation}

\begin{equation}
\left\{%
\begin{array}{lcl}
R(V_{k,i}) &=& r_i^{k,-}V_{k,i-1}+r_i^{k,c}V_{k,i}+r_i^{k,+}V_{k,i+1},\\
Q(V_{k,i}) &=& q_i^{k,-}V_{k,i-1}+q_i^{k,c}V_{k,i}+q_i^{k,+}V_{k,i+1}.
\end{array}\right.
\end{equation}

\noindent with\\

$\widehat{\textbf{\emph{f}}}^{n}(x_i)=\textbf{\emph{u}}(x_i,t_n)+\frac{\Delta
t}{2}(-[\textbf{\emph{L}}_{x,\varepsilon}\textbf{\emph{u}}](x_i,t_n)+\textbf{\emph{f}}(x_i,t_n)+\textbf{\emph{f}}(x_i,t_{n+1})).$\\

\noindent The  coefficients $r_i^{k,*},i=1,\dots N-1, k=1,2, \ldots,
M, ~ *=-,c,+$ are given by
\begin{equation}\label{4.6}
\left\{%
\begin{array}{lcl}
r_i^{k,-} &=& \frac{\Delta
t}{2}(\frac{-2\varepsilon}{h_i(h_i+h_{i+1})}+q_i^{k,-}(a_{k,k;i-1}+\frac{2}{\Delta
t}))\\
 r_i^{k,c} &=& \frac{\Delta
 t}{2}(q_i^{k,-}a_{k,k;i-1}+q_i^{k,c}a_{k,k;i}+q_i^{k,+}a_{k,k;i+1})-r_i^{k,-}-r_i^{k,+}+1\\
 r_i^{k,+} &=& \frac{\Delta
t}{2}(\frac{-2\varepsilon}{h_{i+1}(h_i+h_{i+1})}+q_i^{k,+}(a_{k,k;i+1}+\frac{2}{\Delta
t}))
\end{array} \right.
\end{equation}
\noindent The coefficients $q_i^{k,*},i=1,\dots,N-1,~k=1,2 \ldots,
M,
 ~~*=-,c,+$ are defined in two different ways.\\

$(i)$~For the mesh points
 located in $(0,\tau)\cup(1-\tau,1)$, the coefficients
 $q_i^{k,*},i=\{1,\dots,
N/4-1\}\cup\{3N/4+1,\dots,N-1\}, k=1,2, \ldots, M,
 ~*=-,c,+$, are given by
\begin{equation}\label{4.7}
q_i^{k,-}=\frac{1}{12},~~~~~~~~~
 q_i^{k,c}=\frac{5}{6},~~~~~~~~
 q_i^{k,+}=\frac{1}{12}.
\end{equation}

$(ii)$~For the mesh points located in $[\tau,1-\tau]$, depending on
the relation between $h_{\max}$ and $\varepsilon$,  the coefficients
$q_i^{k,*},i=1,\dots,N-1,~k=1,2, \ldots, M,
 ~*=-,c,+$ are defined in
two different cases. Define $\widehat{a}_{kk}=a_{kk}+2/\Delta t$ for $k=1,2, \ldots, M$.\\

 In the first case, when $\gamma h_{\max}^2||\widehat{a}_{kk}||_\infty\leqslant \varepsilon,
$ where $\gamma$ is a positive constant independent of $\varepsilon$
and $\Delta t$, the coefficients $q_i^{k,*},i=N/4,\dots,3N/4,
~k=1,2, \ldots, M,~*=-,c,+$,
are given by
\begin{equation}
q_j^{k,-}=\frac{2h_j-h_{j+1}}{6(h_j+h_{j+1})},~~~~~~~~q_j^{k,c}=\frac{5}{6},~~~~~~~~q_j^{k,+}=\frac{2h_{j+1}-h_j}{6(h_j+h_{j+1})}.
\end{equation}

 While in the second case, when
$\gamma h_{\max}^2||\widehat{a}_{kk}||_\infty> \varepsilon, $ where
$\gamma$ is a positive constant independent of $\varepsilon$ and
$\Delta t$, the coefficients $q_i^{k,*},i=N/4,\dots,3N/4,~k=1,2,
\ldots, M,~*=-,c,+$, are given by
\begin{equation}\label{4.8}
q_i^{k,-}=0,~~~~~~q_i^{k,c}=1,~~~~~~q_i^{k,+}=0.
\end{equation}

The above definition of coefficients $q_i^k$'s and $r_i^k$'s show
that the scheme (24)-(25) is defined by the fourth order compact
difference scheme within the boundary layer region
$(0,\tau)\cup(1-\tau,1)$.  While in the regular region
$[\tau,1-\tau]$, the  scheme $(24)$-$(25)$ is defined  by a modified high order non-equidistant  difference scheme when
$\gamma h_{\max}^2||\widehat{a}_{kk}||_\infty\leqslant \varepsilon$
 and  is defined by the central difference scheme when
$\gamma h_{\max}^2||\widehat{a}_{kk}||_\infty> \varepsilon $.  This
means that the scheme $(24)$-(25) considers the
 high-order approximation  only when the local mesh width is
small enough to give non-positive off-diagonal entries while at all
other mesh points the central difference scheme is used. This
combination leads to the following lemma.
\begin{lemma}
Let $\gamma=1/6$ and $N_0$ be the smallest positive integer such
that
$$\displaystyle \max_k \{4\sigma_0^2(||a_{kk}||_\infty+2/\Delta t)/3\}<(N_0/L_{N_0})^2,$$
where $L_{N_0}=L(N_0)$ defined in (21). Then, for any $N\geq N_0$,  the discrete
operator defined by (24)-(25) is of positive type.
\end{lemma}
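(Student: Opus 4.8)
The plan is to show that the matrix of $\widehat{\textbf{\emph{L}}}^N_{x,\varepsilon}$ has strictly positive diagonal entries, non-positive off-diagonal entries and strictly positive row sums, since this is exactly what ``of positive type'' requires (and what yields the discrete maximum principle). Reading the entries off from (26)--(29), the full $M(N-1)\times M(N-1)$ system has a natural block form: the diagonal block of component $k$ is tridiagonal with entries $r_i^{k,-},r_i^{k,c},r_i^{k,+}$, while for $l\neq k$ the $(k,l)$ block is tridiagonal with entries $\tfrac{\Delta t}{2}q_i^{k,*}a_{kl;i+*}$. Hence I must verify that every inter-component entry and every within-component neighbour entry $r_i^{k,\pm}$ is non-positive; that every diagonal entry $r_i^{k,c}$ is positive; and that every row sum is positive.

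Two of these cost almost nothing. In each of the three weight regimes (30), (31), (32) one has $q_i^{k,*}\geq 0$ (in the non-equidistant case (31) this uses $2h_j\geq h_{j+1}$ and $2h_{j+1}\geq h_j$, which are guaranteed by the mesh-regularity bound (23)), so with the sign hypothesis $a_{kl}\leq 0$, $k\neq l$, from (4) every inter-component entry is non-positive. For the row sums, the identity for $r_i^{k,c}$ in (29) gives $r_i^{k,-}+r_i^{k,c}+r_i^{k,+}=1+\tfrac{\Delta t}{2}\big(q_i^{k,-}a_{kk;i-1}+q_i^{k,c}a_{kk;i}+q_i^{k,+}a_{kk;i+1}\big)$, so after adding the off-diagonal blocks and using the normalisation $q_i^{k,-}+q_i^{k,c}+q_i^{k,+}=1$ the complete row sum collapses to $1+\tfrac{\Delta t}{2}\big(q_i^{k,-}\sum_l a_{kl;i-1}+q_i^{k,c}\sum_l a_{kl;i}+q_i^{k,+}\sum_l a_{kl;i+1}\big)\geq 1+\tfrac{\Delta t}{2}\beta^{*}>0$ by assumption (5). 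Moreover, once $r_i^{k,\pm}\leq 0$ is known, the same identity in (29) together with $a_{kk}>0$ forces $r_i^{k,c}\geq 1>0$, so diagonal positivity is not a separate matter.

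Everything therefore reduces to the single sign condition $r_i^{k,-}\leq 0$ (and, symmetrically, $r_i^{k,+}\leq 0$), that is, to $\tfrac{2\varepsilon}{h_i(h_i+h_{i+1})}\geq q_i^{k,-}\,(a_{kk;i-1}+\tfrac{2}{\Delta t})$, which I would settle region by region. In the interior central-difference case (32) this is immediate, since $q_i^{k,\pm}=0$ makes $r_i^{k,\pm}$ strictly negative. In the non-equidistant case (31), inserting $q_i^{k,-}=\tfrac{2h_i-h_{i+1}}{6(h_i+h_{i+1})}$, cancelling $h_i+h_{i+1}$, and bounding $2h_i-h_{i+1}\leq 2h_{\max}$ and $a_{kk;i-1}+\tfrac{2}{\Delta t}\leq\|\widehat{a}_{kk}\|_\infty$ reduces the claim to $6\varepsilon\geq h_{\max}^2\|\widehat{a}_{kk}\|_\infty$, which is precisely the inequality defining this case for the prescribed value $\gamma=1/6$. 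Finally, in the layer region the mesh is uniform, $h_i=h_{i+1}=4\sigma/N$, and $q_i^{k,-}=1/12$ from (30), so the condition becomes $\tfrac{\varepsilon N^2}{16\sigma^2}\geq\tfrac{1}{12}\|\widehat{a}_{kk}\|_\infty$; substituting $\sigma=\sigma_0\sqrt{\varepsilon}\,L$ this turns into $(N/L)^2\geq\tfrac{4\sigma_0^2}{3}\|\widehat{a}_{kk}\|_\infty$, and since $N/L(N)$ is nondecreasing this follows for every $N\geq N_0$ from the definition of $N_0$, the degenerate branch $\sigma=1/4$ (where $\varepsilon\geq(16\sigma_0^2L^2)^{-1}$) being covered by the very same estimate.

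I expect the genuine obstacle to be the layer-region inequality: it is the only place where the transition width $\sigma$, the uniform step $4\sigma/N$ and the balance constant $\sigma_0$ must be tracked so that the resulting constraint reproduces the definition of $N_0$ \emph{exactly}, and where the borderline $\sigma=1/4$ has to be checked on its own. The non-equidistant case is the second delicate point, since the choice $\gamma=1/6$ is forced precisely by the requirement $\tfrac{h_{\max}}{3}\|\widehat{a}_{kk}\|_\infty\leq\tfrac{2\varepsilon}{h_{\max}}$; the remaining assertions---the inter-component signs, the row sums and the diagonal positivity---are routine once the mesh-regularity bound (23) is available.
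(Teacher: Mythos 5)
Your overall reduction is the same as the paper's, and in places more explicit: you correctly isolate the three things to check (inter-component signs via $q_i^{k,*}\geq 0$ together with $a_{kl}\leq 0$, row sums via the normalisation of the $q$'s and assumption (5), and the signs of $r_i^{k,\pm}$), and your region-by-region verification of $r_i^{k,\pm}\leq 0$ --- trivial for (32), reduced to $6\varepsilon\geq h_{\max}^2\|\widehat{a}_{kk}\|_\infty$ for (31) with $\gamma=1/6$, and reduced to $(N/L)^2\geq \tfrac{4}{3}\sigma_0^2\|\widehat{a}_{kk}\|_\infty$ in the layer region --- is exactly what the hypothesis on $N_0$ is designed to deliver.

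The genuine gap is the parenthetical claim that $2h_j\geq h_{j+1}$ (hence $q_j^{k,-}\geq 0$) in case (31) ``is guaranteed by the mesh-regularity bound (23)''. It is not: (23) only gives $h_{j+1}-h_j\leq CN^{-2}$ with a generic constant, whereas $h_j$ can be as small as $4\sigma/N=4\sigma_0\sqrt{\varepsilon}\,L/N$ near $j=N/4$, so $h_j\geq h_{j+1}-h_j$ can fail outright when $\varepsilon$ is small relative to $N$; indeed $h_{N/4+1}/h_{N/4}=1+3p/(4\sigma N^2)$, which exceeds $2$ unless $4\sigma N^2\geq 3p$. This verification is precisely the bulk of the paper's proof of the lemma: it reduces $2h_i\geq h_{i+1}$ to the non-negativity of the quadratic $\widetilde{w}(z)=3pz^2-6pz-12p+4\sigma N^2$, whose discriminant is non-positive exactly when $4\sigma N^2\geq 15p$, and then observes that this last inequality is itself a consequence of the case condition $\gamma h_{\max}^2\|\widehat{a}_{kk}\|_\infty\leq\varepsilon$ under which the weights (31) are activated. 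So the non-negativity of the $q$'s in the HODIE region is not a generic mesh-regularity fact but depends on the explicit cubic form of $\mathcal{K}$ and on the switching criterion of the hybrid scheme; this is also the point of Remark 4.2, which notes that the property fails on the standard Shishkin mesh. Your proof needs this argument (or an equivalent one) supplied before the remaining steps, which are otherwise correct, can be assembled.
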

\begin{proof}~Firstly, for  $x_i\in(0,\tau)\cup(1-\tau,1)$, the
fourth order compact difference scheme is considered in this region.
The condition $\displaystyle \max_k
\{4\sigma_0^2(||a_{kk}||_\infty+2/\Delta t)/3\}<(N_0/L_{N_0})^2$ for any
$N\geq N_0$, where $L_{N_0}=L(N_0)$ as defined in (21), with the the
coefficients $q_i^{k,*},~r_i^{k,*},
 ~*=-,c,+$, ~~$k=1,2, \ldots, M$, defined by (\ref{4.6})-(\ref{4.7}) and
 the assumption $(4)$-(5), concludes the lemma.\\

Secondly, for $x_i\in[\tau,1-\tau]$ when  $\gamma
h_{\max}^2||\widehat{a}_{kk}||_\infty> \varepsilon$, the central
difference
scheme is considered. Hence the proof is trivial.\\

While in the opposite case, for $x_i \in [\tau,1-\tau]$  when
$\gamma h_{\max}^2||\widehat{a}_{kk}||_\infty \leqslant \varepsilon$, we
decompose $[\tau,1-\tau]:=[\tau, x_{N/2}]\cup[x_{N/2},1-\tau]$ and
study the sign of coefficients $q_i$'s in these two cases, separately.
First, when $x_i \in [\tau, x_{N/2}]$ the coefficients $q_i^{k,+}$
is clearly non-negative on generalized Shishkin mesh
$\overline{\Omega}_N^S$ while the coefficient $q_i^{k,-}$ will 
be non-negative when $2h_i-h_{i+1}\geqslant0$ for $N/4\leqslant i\leqslant N/2.$
The assertion is trivially true for $i=N/2$ because of uniform mesh (at symmetry). For $N/4\leqslant
i\leqslant N/2-1,$ we can write it\\
$$h_{i}\geqslant h_{i+1}-h_i,$$
\noindent follows if (cf. (23))
$$N\mathcal{K}'((i-1)/N)\geqslant\mathcal{K}^{''}((i+1)/N),$$
that is, if
$$\widetilde{w}(z)=3pz^2-6pz-12p+4\sigma N^2\geqslant0,$$
where $z=i-1-N/4\geqslant-1.$ It is not hard to verify that the discriminant of the quadratic function $\widetilde{w}$ is non-positive  if  $4\sigma N^2\geqslant 15p.$
Since $ \gamma h_{\max}^2||\widehat{a}_{kk}||_\infty\leqslant
\varepsilon$ definitely implies $4\sigma N^2\geqslant
 15p$, it follows that $\widetilde{w}(z)\geq0$ for all $z$. This completes the proof.\\

 Similar to this, we can prove
$ q_i^{k,+}\geqslant 0$ for $N/2\leqslant i\leqslant 3N/4.$ Thus
the condition $ \gamma h_{\max}^2||\widehat{a}_{kk}||_\infty\leqslant
\varepsilon$ with the  coefficients $q_i^{k,*},~r_i^{k,*},
 ~*=-,c,+$, ~~$k=1,2, \ldots, M$, defined by (\ref{4.6}),(31) and the assumption $(4)$-(5), concludes the lemma.

\end{proof}

\begin{remark}
It can be seen that the scheme $(24)$-$(25)$ on standard Shishkin
mesh does not satisfy the above Lemma 4.1 because the coefficients
$q_i^{k,*}$ are not always non-negative at the transition points,
due to the fact that the standard Shishkin mesh is very anisotropic
in nature. While if we consider the scheme $(24)$-$(25)$ on the
generalized Shishkin mesh $\overline{\Omega}_N^S$ then the
coefficients $q_i^{k,*}$ are always
non-negative. This is used in the proof of Lemma 4.1.\\
\end{remark}

Using the Lemma 4.1, the discretization operator defined  by
(24)-(25) is of positive type and it satisfies the following
discrete comparison principle.

\noindent \begin{lemma}(Discrete Comparison Principle) Let
$\widehat{\textbf{V}}$ and $\widehat{\textbf{W}}$ be two mesh
functions and satisfy
$[\widehat{\textbf{L}}^N_{x,\varepsilon}\widehat{\textbf{V}}]_i\geq
[\widehat{\textbf{L}}^N_{x,\varepsilon}\widehat{\textbf{W}}]_i$,~
$i=1,2,...,N-1,$~~$\widehat{\textbf{V}}_0\geq
\widehat{\textbf{W}}_0$ and $\widehat{\textbf{V}}_N\geq
\widehat{\textbf{W}}_N$, then $\widehat{\textbf{V}}_i\geq
\widehat{\textbf{W}}_i$~ $i=0,1,...,N.$\\
\end{lemma}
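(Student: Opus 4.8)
The plan is to derive the comparison principle directly from the positive-type property of Lemma 4.1 via the standard discrete-minimum argument. By linearity of $\widehat{\textbf{L}}^N_{x,\varepsilon}$, it suffices to set $\textbf{Z}=\widehat{\textbf{V}}-\widehat{\textbf{W}}$ and prove that the hypotheses $[\widehat{\textbf{L}}^N_{x,\varepsilon}\textbf{Z}]_i\geq\mathbf{0}$ for $i=1,\dots,N-1$, together with $\textbf{Z}_0\geq\mathbf{0}$ and $\textbf{Z}_N\geq\mathbf{0}$, force $\textbf{Z}_i\geq\mathbf{0}$ for every $i$.

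First I would argue by contradiction. Let $Z_{k^*,i^*}=\min_{1\leq k\leq M,\;0\leq i\leq N}Z_{k,i}$ and suppose this value is strictly negative. The boundary inequalities $\textbf{Z}_0\geq\mathbf{0}$, $\textbf{Z}_N\geq\mathbf{0}$ force $i^*$ to be an interior index, $1\leq i^*\leq N-1$. I would then bound the $k^*$-th component of $[\widehat{\textbf{L}}^N_{x,\varepsilon}\textbf{Z}]_{i^*}$, namely $R(Z_{k^*,i^*})+\frac{\Delta t}{2}\sum_{l\neq k^*}Q(a_{k^*l}Z_{l,i^*})$, from above. By Lemma 4.1 (for $N\geq N_0$) the diagonal weight $r_{i^*}^{k^*,c}$ is positive while $r_{i^*}^{k^*,\pm}\leq0$ and every $q_{i^*}^{k^*,*}\geq0$; together with assumption (4), which gives $a_{k^*l}\leq0$ for $l\neq k^*$, this makes all spatial off-diagonal coefficients and all cross-coupling coefficients non-positive. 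Since $Z_{k^*,i^*}$ is the global minimum, each neighbouring value $Z_{k^*,i^*\pm1}$ and each coupled value $Z_{l,i^*-1},Z_{l,i^*},Z_{l,i^*+1}$ is $\geq Z_{k^*,i^*}$; replacing every such value by $Z_{k^*,i^*}$ and using the non-positivity of the corresponding coefficients only raises each term, so
\[[\widehat{\textbf{L}}^N_{x,\varepsilon}\textbf{Z}]_{k^*,i^*}\leq S_{k^*,i^*}\,Z_{k^*,i^*},\]
where $S_{k^*,i^*}$ denotes the total row sum of the $k^*$-th discrete equation.

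It then remains to show that the row sum is strictly positive. From the definition of $r_i^{k,c}$ in (\ref{4.6}) one has $r_{i^*}^{k^*,-}+r_{i^*}^{k^*,c}+r_{i^*}^{k^*,+}=1+\frac{\Delta t}{2}\big(q_{i^*}^{k^*,-}a_{k^*k^*;i^*-1}+q_{i^*}^{k^*,c}a_{k^*k^*;i^*}+q_{i^*}^{k^*,+}a_{k^*k^*;i^*+1}\big)$, and adding the cross-coupling contributions reorganizes $S_{k^*,i^*}$ into $1+\frac{\Delta t}{2}\big(q_{i^*}^{k^*,-}\sum_{l}a_{k^*l;i^*-1}+q_{i^*}^{k^*,c}\sum_{l}a_{k^*l;i^*}+q_{i^*}^{k^*,+}\sum_{l}a_{k^*l;i^*+1}\big)$. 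Assumption (5) bounds each row sum $\sum_{l}a_{k^*l}$ of $\textbf{A}$ below by $\beta^*$, and the normalization $q_{i^*}^{k^*,-}+q_{i^*}^{k^*,c}+q_{i^*}^{k^*,+}=1$ then yields $S_{k^*,i^*}\geq1+\frac{\Delta t}{2}\beta^*>0$. Hence $[\widehat{\textbf{L}}^N_{x,\varepsilon}\textbf{Z}]_{k^*,i^*}\leq S_{k^*,i^*}Z_{k^*,i^*}<0$, contradicting $[\widehat{\textbf{L}}^N_{x,\varepsilon}\textbf{Z}]_{k^*,i^*}\geq0$. Therefore the minimum cannot be negative and $\textbf{Z}\geq\mathbf{0}$, i.e. $\widehat{\textbf{V}}_i\geq\widehat{\textbf{W}}_i$ for all $i$.

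I expect the genuine difficulty to lie in the coupled row-sum step rather than in the scalar skeleton of the argument. The within-component part is the familiar scalar discrete maximum-principle estimate, but for the system one must push the single global minimum through the cross-coupling terms $Q(a_{k^*l}Z_l)$; this is possible precisely because assumption (4) makes the off-diagonal entries of $\textbf{A}$ non-positive and Lemma 4.1 (available only for $N\geq N_0$) guarantees all hybrid weights $q_{i^*}^{k^*,*}$ are non-negative, so that the normalization together with assumption (5) collapses the full row sum to $1+\frac{\Delta t}{2}\beta^*$. Verifying this lower bound uniformly at the transition points, where the $q$-weights change form, is the one place requiring care, but it is exactly what the positive-type conclusion of Lemma 4.1 supplies.
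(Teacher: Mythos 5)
Your argument is correct: the paper itself gives no proof of this lemma, merely asserting that it follows from the positive-type property established in Lemma 4.1, and your discrete-minimum argument is precisely the standard filling-in of that assertion (non-positive off-diagonal coefficients from $r_i^{k,\pm}\leq 0$, $q_i^{k,*}\geq 0$ and $a_{kl}\leq 0$ for $l\neq k$, plus the row-sum bound $1+\tfrac{\Delta t}{2}\beta^*>0$ from the normalization of the $q$'s and assumption (5)). No gaps; this is essentially the same route the paper intends.
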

Using the above discrete comparison principle  we obtain the following discrete stability estimate.\\
\noindent
\begin{lemma}(Discrete Stability Estimate)
Let $\widehat{\textbf{V}}$  be the mesh function with
$\widehat{\textbf{V}}_0=\widehat{\textbf{V}}_N=0$. Then
$$\|\widehat{\textbf{V}}\|\leqslant C\|\widehat{\textbf{L}}^N_{x,\varepsilon}\widehat{\textbf{V}}\|$$
where $C$ is independent of $N$, $\Delta t$ and $\varepsilon$.
\end{lemma}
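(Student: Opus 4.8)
The plan is to prove the estimate by a barrier-function argument resting entirely on the discrete comparison principle of Lemma 4.3. Write $F=\|\widehat{\textbf{L}}^N_{x,\varepsilon}\widehat{\textbf{V}}\|$ and let $\mathbf{1}$ denote the constant mesh function taking the value $(1,\ldots,1)^T$ at every node. The whole argument hinges on one computation: evaluating the discrete operator on $\mathbf{1}$ and showing that each component of $[\widehat{\textbf{L}}^N_{x,\varepsilon}\mathbf{1}]_i$ is bounded below by a fixed positive number, uniformly in $N$, $\Delta t$ and $\varepsilon$.

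First I would carry out this row-sum computation. Summing the three coefficients of $R(\cdot)$ from (\ref{4.6}), the two discrete second-difference terms $-2\varepsilon/(h_i(h_i+h_{i+1}))$ and $-2\varepsilon/(h_{i+1}(h_i+h_{i+1}))$ cancel against the $-r_i^{k,-}-r_i^{k,+}$ appearing inside $r_i^{k,c}$, as do the $2/\Delta t$ contributions; what survives is $r_i^{k,-}+r_i^{k,c}+r_i^{k,+}=1+\frac{\Delta t}{2}\big(q_i^{k,-}a_{kk;i-1}+q_i^{k,c}a_{kk;i}+q_i^{k,+}a_{kk;i+1}\big)$. Adding the off-diagonal couplings $\frac{\Delta t}{2}\sum_{j\neq k}Q(a_{kj})$ and collecting terms node by node yields
\[
[\widehat{\textbf{L}}^N_{x,\varepsilon}\mathbf{1}]_{k,i}=1+\frac{\Delta t}{2}\Big(q_i^{k,-}{\textstyle\sum_j} a_{kj;i-1}+q_i^{k,c}{\textstyle\sum_j} a_{kj;i}+q_i^{k,+}{\textstyle\sum_j} a_{kj;i+1}\Big).
\]
By Lemma 4.1 the scheme is of positive type, so the weights $q_i^{k,*}$ are non-negative; combining this with the normalization $q_i^{k,-}+q_i^{k,c}+q_i^{k,+}=1$ and the positivity assumption $\sum_j a_{kj}\geqslant\beta^*>0$ from (5), the lower bound is at least $1+\frac{\Delta t}{2}\beta^*\geqslant1$ for every $i$ and $k$.

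With this in hand I would introduce the two barrier mesh functions $\boldsymbol{\Phi}^{\pm}=F\mathbf{1}\pm\widehat{\textbf{V}}$. At the endpoints $\boldsymbol{\Phi}^{\pm}_0=\boldsymbol{\Phi}^{\pm}_N=F\mathbf{1}\geqslant\mathbf{0}$ because $\widehat{\textbf{V}}_0=\widehat{\textbf{V}}_N=0$, while in the interior linearity of the operator gives $[\widehat{\textbf{L}}^N_{x,\varepsilon}\boldsymbol{\Phi}^{\pm}]_i=F\,[\widehat{\textbf{L}}^N_{x,\varepsilon}\mathbf{1}]_i\pm[\widehat{\textbf{L}}^N_{x,\varepsilon}\widehat{\textbf{V}}]_i\geqslant F\mathbf{1}-F\mathbf{1}=\mathbf{0}$, using the row-sum bound just established together with $|[\widehat{\textbf{L}}^N_{x,\varepsilon}\widehat{\textbf{V}}]_i|\leqslant F\mathbf{1}$. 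The discrete comparison principle (Lemma 4.3), applied to $\boldsymbol{\Phi}^{\pm}$ and the zero mesh function, then forces $\boldsymbol{\Phi}^{\pm}_i\geqslant\mathbf{0}$ at every node, i.e. $|\widehat{\textbf{V}}_i|\leqslant F\mathbf{1}$ componentwise, which is exactly $\|\widehat{\textbf{V}}\|\leqslant\|\widehat{\textbf{L}}^N_{x,\varepsilon}\widehat{\textbf{V}}\|$, so that $C=1$ works.

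The one place demanding care — and the main obstacle — is the row-sum identity: one must track the exact cancellation of the $\varepsilon$-dependent diffusion coefficients and the $2/\Delta t$ terms across the definitions in (\ref{4.6}), so that only the identity contribution $1$ and a non-negative reaction contribution remain. It is precisely this structure that makes the lower bound independent of $\varepsilon$ and $\Delta t$; the non-negativity of all $q_i^{k,*}$ supplied by Lemma 4.1 is what permits the positivity condition (5) to be invoked, and without the positive-type property the barrier $F\mathbf{1}$ would fail.
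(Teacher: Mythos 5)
Your proof is correct and follows exactly the route the paper intends: the paper states Lemma 4.4 without a written proof, remarking only that it follows from the discrete comparison principle, and your barrier argument with $\boldsymbol{\Phi}^{\pm}=F\mathbf{1}\pm\widehat{\textbf{\emph{V}}}$ together with the row-sum identity $r_i^{k,-}+r_i^{k,c}+r_i^{k,+}=1+\frac{\Delta t}{2}Q(a_{kk})_i$ is precisely the standard argument being invoked. Your computation correctly isolates the lower bound $1+\frac{\Delta t}{2}\beta^{*}\geqslant 1$ for $[\widehat{\textbf{\emph{L}}}^N_{x,\varepsilon}\mathbf{1}]_{k,i}$ using the non-negativity of the $q_i^{k,*}$ guaranteed under the hypothesis of Lemma 4.1, so the estimate even holds with $C=1$.
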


Let  $\Gamma_{\widehat{\textbf{\emph{u}}}^{n+1}}(x_i)$ be the
truncation error associated to the scheme (24)-(25) and is defined
by
$$\Gamma_{\widehat{\textbf{\emph{u}}}^{n+1}}(x_i)=[\widehat{\textbf{\emph{L}}}^N_{x,\varepsilon}(\widehat{\textbf{\emph{u}}}^{n+1}-\widehat{\textbf{\emph{U}}}^{n+1})]_i.$$

\begin{lemma}
Let $\widehat{\textbf{u}}^{n+1}$ be the solution of (\ref{3.3}) and
$\widehat{\textbf{U}}^{n+1}$ be the approximate solution of the
spatial discretized scheme (24)-(25). Let the hypothesis of Lemma
4.1 be satisfied. Then the global error satisfies
$$|\widehat{\textbf{u}}^{n+1}(x_i)-\widehat{\textbf{U}}^{n+1}_i|\leqslant \textbf{C} \Delta t(L/N)^4,$$
with the assumption that $L^{-4}\le C \Delta t$, where $C$ and $\textbf{C}$ are a positive constants independent of $N$, $\Delta t$ and $\varepsilon$.\end{lemma}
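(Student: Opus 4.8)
The plan is to obtain the bound from the discrete stability estimate of Lemma 4.4, which gives $|\widehat{\textbf{\emph{u}}}^{n+1}(x_i)-\widehat{\textbf{\emph{U}}}^{n+1}_i|\leqslant C\,\|\Gamma_{\widehat{\textbf{\emph{u}}}^{n+1}}\|$, so that everything reduces to bounding the truncation error $\Gamma_{\widehat{\textbf{\emph{u}}}^{n+1}}=\widehat{\textbf{\emph{L}}}^N_{x,\varepsilon}\widehat{\textbf{\emph{u}}}^{n+1}-[\emph{\textbf{Q}}\widehat{\textbf{\emph{f}}}^n]$ in the discrete maximum norm. First I would observe that, because $\widehat{\textbf{\emph{u}}}^{n+1}$ solves \eqref{3.3} and the averaging operator $Q$ is applied on both sides, the identity contributions (controlled by the normalization $q_i^{k,-}+q_i^{k,c}+q_i^{k,+}=1$) and the reaction contributions (which enter as $\frac{\Delta t}{2}Q(a_{kj}\cdot)$ on both sides) cancel exactly. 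Thus $\Gamma_{\widehat{\textbf{\emph{u}}}^{n+1}}$ collapses to the consistency error of the discrete diffusion term alone and carries an overall factor $\frac{\Delta t}{2}\varepsilon$; componentwise it equals $-\frac{\Delta t}{2}\varepsilon\bigl(\delta^2\widehat{u}^{n+1}_k-Q(d^2\widehat{u}^{n+1}_k/dx^2)\bigr)$, where $\delta^2$ is the weighted three-point second difference built into the $r_i^{k,*}$ of \eqref{4.6}. I would then split $\widehat{\textbf{\emph{u}}}^{n+1}=\widehat{\textbf{\emph{v}}}^{n+1}+\widehat{\textbf{\emph{w}}}^{n+1}$ via the Shishkin-type decomposition of Lemma 3.3 and estimate the consistency error of each part on each mesh region.

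Next I would carry out the region-by-region estimates using the derivative bounds \eqref{3.20}--\eqref{3.21} and the mesh-width properties \eqref{4.a}. On the fine (uniform) layer mesh $(0,\tau)\cup(1-\tau,1)$, of width $h\sim\sigma_0\sqrt{\varepsilon}\,L/N$, the fourth order compact scheme \eqref{4.7} is active and its consistency error is $O(h^4)$ times a sixth derivative. For the layer component, inserting $|d^6\widehat{w}^{n+1}_k/dx^6|\leqslant C\varepsilon^{-3}(\cdots)$ from \eqref{3.21} gives $\frac{\Delta t}{2}\varepsilon\,h^4\varepsilon^{-3}\leqslant C\Delta t(L/N)^4$, since the powers of $\varepsilon$ cancel ($\varepsilon\cdot\varepsilon^2\cdot\varepsilon^{-3}=1$); this is the leading, dominant term. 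For the regular component \eqref{3.20} makes the same expression smaller by a factor $\varepsilon^2$. On the smoothly graded coarse mesh $[\tau,1-\tau]$, where \eqref{4.a} supplies $h_{j+1}\leqslant CN^{-1}$ and the sharp increment bound $h_{j+1}-h_j\leqslant CN^{-2}$, I would treat the two branches of the hybrid switch separately; in the first branch (the modified high order non-equidistant scheme) the consistency error for the regular part is again of almost fourth order and \eqref{3.20} yields $O(\Delta t\,N^{-4})$.

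The decisive point, where the hypothesis $L^{-4}\leqslant C\Delta t$ is consumed, is the second branch, where the plain central difference is used precisely when $\gamma h_{\max}^2\|\widehat{a}_{kk}\|_\infty>\varepsilon$. Since $\|\widehat{a}_{kk}\|_\infty=\|a_{kk}+2/\Delta t\|_\infty$ is comparable to $2/\Delta t$ for small $\Delta t$, this branch condition is equivalent to $\Delta t\,\varepsilon\leqslant C h_{\max}^2$. The non-uniform central difference consistency error for the regular part is $O(N^{-2})$ (the leading term being $\tfrac13(h_{j+1}-h_j)\widehat{v}'''$, controlled by $h_{j+1}-h_j\leqslant CN^{-2}$), so the prefactor gives $\tfrac{\Delta t}{2}\varepsilon\cdot O(N^{-2})\leqslant C h_{\max}^2N^{-2}\leqslant CN^{-4}$; note that here the $\Delta t$ has been traded for mesh size. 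To rewrite this residual as the claimed $\textbf{\emph{C}}\Delta t(L/N)^4$ I would invoke exactly $L^{-4}\leqslant C\Delta t$, since then $N^{-4}\leqslant C\Delta t\,L^4N^{-4}=C\Delta t(L/N)^4$. The layer component in the coarse region is handled by its exponential smallness: for $x\geqslant\tau=\sigma$ one has $\exp(-x\sqrt{\beta/\varepsilon})\leqslant\exp(-\sigma_0\sqrt{\beta}\,L)\leqslant e^{-4L}\leqslant(L/N)^4$ by the choice $\sigma_0\geqslant4/\sqrt{\beta}$ together with \eqref{0}, which I would combine with a discrete barrier for $\widehat{\textbf{\emph{L}}}^N_{x,\varepsilon}$ (available through the positive-type property of Lemma 4.1 and the comparison principle of Lemma 4.3) to control the discrete second difference of the layer remnant. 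Collecting the four contributions gives $\|\Gamma_{\widehat{\textbf{\emph{u}}}^{n+1}}\|\leqslant\textbf{\emph{C}}\Delta t(L/N)^4$, and Lemma 4.4 then delivers the stated global error.

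I expect the main obstacle to be the consistency analysis on the non-uniform coarse mesh together with the accompanying $\varepsilon$-bookkeeping. One must show that the modified non-equidistant scheme of the first branch genuinely retains (almost) fourth order on the smoothly graded mesh -- this is where the sharp second-order increment estimate $h_{j+1}-h_j\leqslant CN^{-2}$ of \eqref{4.a}, rather than the crude $O(N^{-1})$ spacing, is indispensable -- and one must simultaneously keep precise track of the powers of $\varepsilon$ so that the large layer derivatives in the coarse region (handled through the exponential decay and the discrete barrier), the central-difference residual of the second branch, and the matching across the transition point $x_{N/4}=\sigma$ all fit under the single bound $\Delta t(L/N)^4$. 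The recurring subtlety is to monitor when the $\frac{\Delta t}{2}\varepsilon$ prefactor survives (the compact regions, giving a clean $\Delta t(L/N)^4$) and when it is absorbed into the mesh size (the central-difference region, where the relation $L^{-4}\leqslant C\Delta t$ becomes unavoidable).
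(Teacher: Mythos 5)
Your proposal follows essentially the same route as the paper's proof: reduction to the truncation error via the discrete stability of Lemma 4.4, the Shishkin-type decomposition of $\widehat{\textbf{\emph{u}}}^{n+1}$, region-by-region Taylor estimates carrying the $\varepsilon\,\Delta t$ prefactor, the branch condition $\gamma h_{\max}^2\|\widehat{a}_{kk}\|_\infty>\varepsilon$ used to trade $\varepsilon\,\Delta t$ for $h_{\max}^2\leqslant CN^{-2}$, and the hypothesis $L^{-4}\leqslant C\Delta t$ consumed exactly where the paper consumes it (eq.~(44)) to convert the $\Delta t$-free residual $N^{-4}$ back into $\Delta t(L/N)^4$. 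The only cosmetic deviation is your appeal to a discrete barrier for the layer remnant on the coarse mesh, where the paper simply bounds that contribution directly by $C\varepsilon\,\Delta t$ times the second derivative of the layer part, which the exponential decay estimate reduces to $C\Delta t(L/N)^4$; both work.
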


\begin{proof}\noindent If $\tau=1/4$, then the mesh $\overline{D}_N$ is
uniform, that is, $N^{-1}$ is very small respect to $\varepsilon$
and therefore a classical analysis can be used to prove the
convergence of the scheme. So, in the analysis we only consider the
case
$\tau=\sigma_0\sqrt{\varepsilon} L$.\\

\noindent The truncation error estimate
$\Gamma_{\widehat{\textbf{\emph{u}}}^{n+1}}(x_i)$ of the  scheme
$(24)$-$(25)$ on the generalized Shishkin mesh
$\overline{D}_N$ is discussed in the following cases.\\

$(I)$~When  $x_i\in (0, \tau)\cup(1-\tau, 1)$, we have
$h_i=h_{i+1}=4 \sigma_0 \sqrt{\varepsilon}N^{-1} L$. By Taylor expansions
we obtain
\begin{equation}
|\Gamma_{\widehat{\textbf{\emph{u}}}^{n+1}}(x_i)|\leqslant \textbf{\emph{C}} \varepsilon
\Delta t
h_i^4\left\|\frac{d^6\widehat{\textbf{\emph{u}}}^{n+1}}{dx^6}\right\|_{[x_{i-1},x_{i+1}]},
\end{equation}
\noindent Using $h_i=4 \sigma_0 \sqrt{\varepsilon}N^{-1} L$ and
$\left\|\frac{d^6\widehat{\textbf{\emph{u}}}^{n+1}}{dx^6}\right\|\leqslant
C\varepsilon^{-3}$, it follows that
\begin{equation}\label{4.9}
|\,[\Gamma(\textbf{\emph{u}})]_i\,|\leqslant \textbf{\emph{C}} \Delta t(L/N)^4 ,
\end{equation}

 $(II)$~When  $x_i\in [\tau, 1-\tau\ ],$ according
to the decomposition
$\widehat{\textbf{\emph{u}}}^{n+1}=\widehat{\textbf{\emph{v}}}^{n+1}+\widehat{\textbf{\emph{w}}}^{n+1}$,
split the truncation error into two parts to obtain
\begin{equation}\label{4.10}
|\,\Gamma_{\widehat{\textbf{\emph{u}}}^{n+1}}(x_i)\,|\le|\,\Gamma_{\widehat{\textbf{\emph{v}}}^{n+1}}(x_i)\,|+|\,\Gamma_{\widehat{\textbf{\emph{w}}}^{n+1}}(x_i)\,|.
\end{equation}

\noindent For the mesh points located in $[\tau,1-\tau]$, depending
on the relation between $h_{\max}$ and $\varepsilon$, the  scheme
$(24)$-$(25)$ is
 defined by the  combination modified high order non-equidistant  difference scheme and the central difference scheme. The error analysis
 for both cases are  given as follows.\\

$(i)$~For the case $\gamma h_{\max}^2||\widehat{a}_{kk}||_\infty
\leqslant \varepsilon$, suppose  $\textbf{\emph{g}}\in C^6[0,1]^M$, then
by Taylor expansions we obtain
\begin{equation}
|\Gamma_{\emph{g},k}(x_i)|\leqslant C \varepsilon \Delta
t(\emph{P}_{k,i}+\emph{Q}_{k,i}+\emph{R}_{k,i}),~~\mbox{for}
~~k=1,\ldots,M,
\end{equation}
\noindent where
 $$\emph{P}_{k,i}=(h_{i+1}-h_{i})^2||\emph{g}_{k}^{(4)}||_{[x_{i-1},x_{i+1}]},~~~\emph{Q}_{k,i}=|h_{i+1}-h_{i}|(h_{i+1}+h_{i})^2||\emph{g}_{k}^{(5)}||_{[x_{i-1},x_{i+1}]},$$
 $$\emph{R}_{k,i}=(h_i^4+h_{i+1}^4)||\emph{g}_k^{(6)}||_{[x_{i-1},x_{i+1}]}.$$

\noindent Using (\ref{4.a}) and (\ref{3.20}), we obtain the bound of
the truncation error with respect to the regular part
$\widehat{\textbf{\emph{v}}}^{n+1}$

\begin{equation}\label{4.11}
|\,\Gamma_{\widehat{\textbf{\emph{v}}}^{n+1}}(x_i)\,|\leqslant \textbf{\emph{C}} \Delta t
N^{-4}.
\end{equation}

\noindent Again using (\ref{4.a}) and (\ref{3.21}), we obtain the
bound of the truncation error with respect to the layer part
$\widehat{\textbf{\emph{w}}}^{n+1}$
\begin{equation}\label{4.12}
|\,\Gamma_{\widehat{\textbf{\emph{w}}}^{n+1}}(x_i)\,|\leqslant
\textbf{\emph{C}}\varepsilon^{-2} \Delta t
N^{-4}||B_{\varepsilon}||_{[x_{i-1},x_{i+1}].}
\end{equation}

\noindent For  $x_i\in[\tau,1-\tau]$,
$$||B_{\varepsilon}||_{[x_{i-1},x_{i+1}]}\leqslant
e^{(-x_{N/4-1}\sqrt{\beta/\varepsilon})}+e^{(-(1-x_{3N/4+1})\sqrt{\beta/\varepsilon})}=2e^{(-x_{N/4-1}\sqrt{\beta/\varepsilon})}$$
$$~~~~~~=2e^{(-\tau\sqrt{\beta/ \varepsilon})}e^{(h_{N/4}\sqrt{\beta/\varepsilon})}\leqslant Ce^{-4L}, ~~\mbox{where} ~\tau=\sigma_0\sqrt{\varepsilon} L, ~\sigma_0\geq 4/\sqrt{\beta}.$$

\noindent  Then $e^{-L}\leqslant L/N$  leads to
\begin{equation}\label{4.13}
||B_{\varepsilon}||_{[x_{i-1},x_{i+1}]}\leqslant C(L/N)^4.
\end{equation}
Using (\ref{4.13}) in (\ref{4.12}) with $\gamma
h_{\max}^2||\hat{a}_{kk}||_\infty \leqslant \varepsilon$, we get
\begin{equation}\label{4.14}
|\,\Gamma_{\widehat{\textbf{\emph{w}}}^{n+1}}(x_i)\,|\leqslant \textbf{\emph{C}}\Delta
t(L/N)^{4}.
\end{equation}

\noindent On combining (\ref{4.11}) and (\ref{4.14}) with
(\ref{4.10}), we obtain

\begin{equation}\label{19}
|\,\Gamma_{\widehat{\textbf{\emph{u}}}^{n+1}}(x_i)\,|\le \textbf{\emph{C}} \Delta
t(L/N)^4, ~~~\mbox{for}~~\gamma
h_{\max}^2||\widehat{a}_{kk}||_\infty \leqslant \varepsilon.\\
\end{equation}

$(ii)$~Now, for the case  $\gamma
h_{\max}^2||\widehat{a}_{kk}||_\infty > \varepsilon$, suppose
$\textbf{\emph{g}}\in C^4[0,1]^M$, then by Taylor expansions we
obtain

\begin{equation}
|\Gamma_{\emph{g},k}(x_i)|\leqslant C \varepsilon \Delta t(
\emph{Y}_{k,i}+\emph{Z}_{k,i}),~~~\mbox{for}~~k=1,\ldots,M,
\end{equation}
\noindent where

 $$ \emph{Y}_{k,i}=|h_{i+1}-h_i|||\emph{g}_k^{(3)}||_{[x_{i-1},x_{i+1}]},
 ~~~~\emph{Z}_{k,i}=h_{i+1}^2||\emph{g}_k^{(4)}||_{[x_{i-1},x_{i+1}]}.$$

\noindent Using  (\ref{4.a}) and (\ref{3.20}), we obtain the bound
of the truncation error with respect to the regular part
$\widehat{\textbf{\emph{v}}}^{n+1}$

\begin{equation}
|\,\Gamma_{\widehat{\textbf{\emph{v}}}^{n+1}}(x_i)\,|\leqslant \textbf{\emph{C}}
\varepsilon \Delta t N^{-2}.
\end{equation}

\noindent Now using the condition $\gamma
h_{\max}^2||\widehat{a}_{kk}||_\infty
> \varepsilon$,  we obtain
\begin{equation}\label{4.16}
|\,\Gamma_{\widehat{\textbf{\emph{v}}}^{n+1}}(x_i)\,|\leqslant \textbf{\emph{C}} N^{-4}.
\end{equation}

Note that in $(44)$  the  term $\Delta t$ disappears from the bound
for the error associated
with the regular part; this fact is important in order to impose the relation between the discretization parameters $\Delta t$ and $N$. \\

 To estimate the error with respect to the layer part
$\widehat{\textbf{\emph{w}}}^{n+1}$, suppose $\textbf{\emph{g}}\in
C^2[0,1]^M$, then using
$$ |\,\Gamma_\textbf{\emph{g}}(x_i)\,|\leqslant \textbf{\emph{C}} \varepsilon \Delta t  \|\textbf{\emph{g}}''\|_{[x_{i-1},x_{i+1}]},$$

\noindent we have
$$ |\,\Gamma_{\widehat{\textbf{\emph{w}}}^{n+1}}(x_i)\,|\leqslant \textbf{\emph{C}} \Delta t ||B_{\varepsilon}||_{[x_{i-1},x_{i+1}]}.$$

\noindent Using (\ref{4.13}), we have
\begin{equation}\label{4.17}
 |\,\Gamma_{\widehat{\textbf{\emph{w}}}^{n+1}}(x_i)\,|\leqslant \textbf{\emph{C}}\Delta t(L/N)^4.
\end{equation}

\noindent Combining (\ref{4.16}) and (\ref{4.17}) in (\ref{4.10}) with
the assumption such that $L^{-4}\le C \Delta t$, we obtain

\begin{equation}\label{4.18}
|\,\Gamma_{\widehat{\textbf{\emph{u}}}^{n+1}}(x_i)\,|\leqslant \textbf{\emph{C}} \Delta
t(L/N)^4, ~~~\mbox{for}~~\gamma
h_{\max}^2||\widehat{a}_{kk}||_\infty > \varepsilon.
\end{equation}

\noindent On combining the case $(I)$ and case $(II)$, we obtain the
truncation error estimate for the  scheme (24)-(25) on the
generalized Shishkin mesh $\overline{D}_N$ and it is given by
\begin{equation}\label{4.19}
|\,\Gamma_{\widehat{\textbf{\emph{u}}}^{n+1}}(x_i)\,|\leqslant \textbf{\emph{C}}\Delta
t(L/N)^4.
\end{equation}

Therefore, from the truncation error estimate (\ref{4.19}) and the
uniform stability result  given in Lemma 4.4, we conclude the lemma.
\end{proof}

\section{Total discretization }
In this section, we  write the total discretization  by combining
the time semidiscretization and spatial semidiscretization to
compute the approximate solution of $(1)$-$(3)$ and after that we
prove that the resultant scheme  is uniformly convergent of second
order in time and almost fourth order in spatial variable.
Concretely, the numerical approximate $\textbf{\emph{U}}^n_i$ of
$\textbf{\emph{u}}(x_i,n\Delta t)$ for $i=1,\ldots,N$ and
$n=0,1,\ldots,T/\Delta t$, are obtained by the following totally
discrete scheme
\begin{equation}\label{5.1}
\left\{%
\begin{array}{ll}
\textbf{\emph{U}}^0_i=\textbf{0},
~~\textbf{\emph{L}}^N_{x,\varepsilon}\textbf{U}^0_{i}=\textbf{\emph{L}}_{x,\varepsilon}\textbf{\emph{u}}(x_i,0),
 ~~i=0(1)N,\\
 \mbox{[}\widehat{\textbf{\emph{L}}}^N_{x,\varepsilon}\textbf{\emph{U}}^{n+1}\mbox{]}_i=\mbox{[}\emph{\textbf{Q}}\textbf{\emph{F}}^n\mbox{]}_i,
~~\mbox{for}~~i=1(1)N-1,\\
\textbf{\emph{U}}^{n+1}_0=\textbf{0},~~\textbf{\emph{U}}^{n+1}_N=\textbf{0},~~ \mbox{for}~~n=0,1,\ldots,T/\Delta t-1,\\
\end{array}
\right.
\end{equation}
where
\begin{equation}\label{discoper}
[\widehat{\textbf{\emph{L}}}^N_{x,\varepsilon}\textbf{\emph{U}}^{n+1}]_i:=\left(%
\begin{array}{c}
  R(U^{n+1}_1)~+~\frac{\Delta t}{2}Q(a_{12}U^{n+1}_2)+...+\frac{\Delta t}{2}Q(a_{1M}U^{n+1}_M)\\

R(U^{n+1}_2)~+~\frac{\Delta t}{2}Q(a_{21}U^{n+1}_1)+...+\frac{\Delta t}{2}Q(a_{2M}U^{n+1}_M)\\
\vdots
\\
R(U^{n+1}_M)~+~\frac{\Delta t}{2}Q(a_{M1}U^{n+1}_1)+...+\frac{\Delta t}{2}Q(a_{MM-1}U^{n+1}_{M-1})\\
\end{array}%
\right)_i,
\end{equation}

\begin{equation}
[\emph{\textbf{Q}}\textbf{\emph{F}}^n]_i:=\left(%
\begin{array}{c}
  Q(F_1^n) \\

  Q(F_2^n)  \\
  \vdots
  \\
  Q(F_M^n) \\

\end{array}%
\right)_i,
\end{equation}
$$\textbf{\emph{F}}^{n}(x_i)=\textbf{\emph{U}}_i^n+\frac{\Delta
t}{2}(-\textbf{\emph{L}}^N_{x,\varepsilon}\textbf{\emph{U}}^{n}_{i}+\textbf{\emph{f}}(x_i,t_n)+\textbf{\emph{f}}(x_i,t_{n+1})),$$
for $i=1(1)N-1,
~n=0,1,\ldots,T/\Delta t-1,$
$$\hspace*{-0.75cm}\textbf{\emph{L}}^N_{x,\varepsilon}\textbf{\emph{U}}^{n+1}_{i}=-\textbf{\emph{L}}^N_{x,\varepsilon}\textbf{\emph{U}}^{n}_{i}-2\frac{\textbf{\emph{U}}^{n+1}_{i}-\textbf{\emph{U}}^{n}_{i}}{\Delta
t}+
\textbf{\emph{f}}(x_i,t_n)+\textbf{\emph{f}}(x_i,t_{n+1}),$$ \noindent and
\begin{equation}
\left\{%
\begin{array}{lcl}
R(V_{k,i}) &=& r_i^{k,-}V_{k,i-1}+r_i^{k,c}V_{k,i}+r_i^{k,+}V_{k,i+1},\\
Q(V_{k,i}) &=& q_i^{k,-}V_{k,i-1}+q_i^{k,c}V_{k,i}+q_i^{k,+}V_{k,i+1}.
\end{array}\right.
\end{equation}

\noindent The coefficients $q_i^{k,*},~
 ~*=-,c,+$ and $r_i^{k,*},~
 ~*=-,c,+$, ~~$k=1,2, \ldots, M$ are defined as in Section 4.\\

\begin{theorem}
Let $\textbf{u}$ be the exact solution of (1)-(3) and let
$\{\textbf{U}^{n+1}_i\}$ be the numerical solution of the scheme
(48). Under the hypothesis of Lemma 4.1, the global error~
$\textbf{u}(x,t_{n+1})-\textbf{U}^{n+1}~$ at the time $t_{n+1}$
satisfies
$$||\textbf{u}(x_i,t_{n+1})-\textbf{U}_i^{n+1}||_{\overline{D}_N}\leqslant C((\Delta t)^2+ (L/N)^4),$$
 with the assumption that $L^{-4}\le C \Delta t$, where $C$ is a positive constant independent of $N$, $\Delta t$ and $\varepsilon$.
\end{theorem}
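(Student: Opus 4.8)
The plan is to control the nodal global error $\textbf{E}^{n+1}:=\textbf{u}(x_i,t_{n+1})-\textbf{U}^{n+1}_i$ by a recursive argument that isolates the time-discretization error from the spatial-discretization error at each step, and then sums the one-step contributions over the $T/\Delta t$ time levels. The natural bridge is the one-step semidiscrete solution $\widehat{\textbf{u}}^{n+1}$ of the auxiliary problem (13), which advances the \emph{exact} solution $\textbf{u}(\cdot,t_n)$ by a single Crank--Nicolson step, together with its spatial discretization $\widehat{\textbf{U}}^{n+1}$ from (24)--(25). First I would write the three-term decomposition
\[
\textbf{E}^{n+1}=\bigl(\textbf{u}(x_i,t_{n+1})-\widehat{\textbf{u}}^{n+1}(x_i)\bigr)+\bigl(\widehat{\textbf{u}}^{n+1}(x_i)-\widehat{\textbf{U}}^{n+1}_i\bigr)+\bigl(\widehat{\textbf{U}}^{n+1}_i-\textbf{U}^{n+1}_i\bigr),
\]
in which the first bracket is exactly the local truncation error $\textbf{e}_{n+1}$ of Crank--Nicolson, the second is the spatial global error of the auxiliary problem, and the third is the error propagated from the previous time level.

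For the first two brackets the estimates are already available: Lemma 3.1 gives $|\textbf{e}_{n+1}(x_i)|\leqslant\textbf{C}(\Delta t)^3$, and Lemma 4.5 (whose hypothesis, including the assumption $L^{-4}\le C\Delta t$, is in force here) gives $|\widehat{\textbf{u}}^{n+1}(x_i)-\widehat{\textbf{U}}^{n+1}_i|\leqslant\textbf{C}\,\Delta t\,(L/N)^4$. For the third bracket I would subtract the two discrete equations: both $\widehat{\textbf{U}}^{n+1}$ and $\textbf{U}^{n+1}$ satisfy the discrete scheme $\widehat{\textbf{L}}^N_{x,\varepsilon}(\cdot)=[\textbf{Q}(\cdot)]$, so that
\[
\widehat{\textbf{L}}^N_{x,\varepsilon}\bigl(\widehat{\textbf{U}}^{n+1}-\textbf{U}^{n+1}\bigr)=\bigl[\textbf{Q}\bigl(\widehat{\textbf{f}}^n-\textbf{F}^n\bigr)\bigr].
\]
A direct computation from the definitions of $\widehat{\textbf{f}}^n$ and $\textbf{F}^n$ yields $\widehat{\textbf{f}}^n-\textbf{F}^n=\textbf{E}^n+\frac{\Delta t}{2}\bigl(\textbf{L}^N_{x,\varepsilon}\textbf{U}^n-[\textbf{L}_{x,\varepsilon}\textbf{u}](\cdot,t_n)\bigr)$, so the data perturbation is governed by the previous-step error $\textbf{E}^n$, the remaining $O(\Delta t)$ term splitting into a factor of $\textbf{E}^n$ plus a spatial-consistency contribution of the same $O\bigl(\Delta t\,(L/N)^4\bigr)$ size already controlled above. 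The third bracket is therefore the fully discrete solution operator applied to a quantity of size $\|\textbf{E}^n\|$.

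This is precisely where the main obstacle lies: to close the recursion I need the totally discrete operator that maps $\textbf{U}^n\mapsto\textbf{U}^{n+1}$ to be \emph{uniformly stable}, i.e. its amplification factor must be bounded independently of $\varepsilon$, $N$ and $\Delta t$ (spectral radius strictly below one, as supported numerically in Tables 2, 4 and 6). Granting this conjecture, the third bracket is bounded by $\|\textbf{E}^n\|$ up to a factor that does not grow with $n$, and the recursion reads $\|\textbf{E}^{n+1}\|\leqslant\|\textbf{E}^n\|+\textbf{C}(\Delta t)^3+\textbf{C}\,\Delta t\,(L/N)^4$. Since $\textbf{E}^0=\mathbf{0}$ (the scheme is started from the exact, vanishing initial data), iterating and using $n+1\leqslant T/\Delta t$ gives
\[
\|\textbf{E}^{n+1}\|\leqslant\frac{T}{\Delta t}\Bigl(\textbf{C}(\Delta t)^3+\textbf{C}\,\Delta t\,(L/N)^4\Bigr)\leqslant C\bigl((\Delta t)^2+(L/N)^4\bigr),
\]
which is the claimed estimate. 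The only nontrivial analytic input beyond Lemmas 3.1 and 4.5 is the uniform stability of the fully discrete propagator; everything else is the bookkeeping of one-step errors and the division of a single factor $\Delta t$ upon summing over the $O(1/\Delta t)$ time levels.
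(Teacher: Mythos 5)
Your proposal is correct and follows essentially the same route as the paper: the identical three-term splitting through $\widehat{\textbf{\emph{u}}}^{n+1}$ and $\widehat{\textbf{\emph{U}}}^{n+1}$, the same invocation of Lemma 3.1 and Lemma 4.5 for the local errors, the same reduction of the third bracket to the transition operator $\textbf{\emph{R}}_N$ acting on the previous-step error, and the same reliance on the (conjectured) uniform boundedness of the powers of $\textbf{\emph{R}}_N$ before summing over the $O(1/\Delta t)$ levels. You correctly identify the stability conjecture as the only nontrivial gap, exactly as the paper does in Remark 5.2.
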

\begin{proof}
  The global error
$\textbf{\emph{u}}(x_i,t_{n+1})-\textbf{\emph{U}}^{n+1}_i~$ of the
totally discrete scheme at the time $t_{n+1}$ can be split in the
form
\begin{equation}
\textbf{\emph{u}}(x_i,t_{n+1})-\textbf{\emph{U}}_i^{n+1}~\leqslant
(\textbf{\emph{u}}(x_i,t_{n+1})-\widehat{\textbf{\emph{u}}}^{n+1}(x_i))~+(\widehat{\textbf{\emph{u}}}^{n+1}(x_i)-\widehat{\textbf{\emph{U}}}_i^{n+1}~
)\end{equation}
$~~~~~~~~~~\hspace*{6.0cm}~~~~~~~~~~~~~~+(\widehat{\textbf{\emph{U}}}_i^{n+1}-\textbf{\emph{U}}_i^{n+1}).$\\
On combining the  result from the Lemma 3.1 and  Lemma 4.5 with
(52), we obtain
\begin{equation}
||\textbf{\emph{u}}(x_i,t_{n+1})-\textbf{\emph{U}}_i^{n+1}~||_{\overline{D}_N}\leqslant
C ((\Delta t)^3+ \Delta
t(L/N)^4)+||\widehat{\textbf{\emph{U}}}_i^{n+1}-\textbf{\emph{U}}_i^{n+1}||_{\overline{D}_N}.
\end{equation}
To bound the term
$||\widehat{\textbf{\emph{U}}}_i^{n+1}-\textbf{\emph{U}}_i^{n+1}||_{\overline{D}_N}$
, we consider that
$\widehat{\textbf{\emph{U}}}^{n+1}-\textbf{\emph{U}}^{n+1}$ can be
written as the solution of one step of $(48)$ with starting value
$\textbf{\emph{u}}(x_i,t_{n})-\textbf{\emph{U}}_i^{n}$, taking the
source term $\textbf{\emph{f}}$ equal to zero together  with zero
boundary conditions. Then it follows that
\begin{equation}
\widehat{\textbf{\emph{U}}}_i^{n+1}-\textbf{\emph{U}}_i^{n+1}=\emph{\textbf{R}}_N(\textbf{\emph{u}}(x_i,t_{n})-\textbf{\emph{U}}_i^{n}),
\end{equation}
where $R_N$ is a linear operator, called the transition operator
associated to the totally discrete scheme (48). Using this with (53)
we obtain a recursive argument as
\begin{equation}
||\textbf{\emph{u}}(x_i,t_{n+1})-\textbf{\emph{U}}_i^{n+1}~||_{\overline{D}_N}\leqslant
C\sum_{k=1}^{n}||\emph{\textbf{R}}_N^{n-k}||_{\overline{D}_N}((\Delta
t)^3+ \Delta t(L/N)^4).
\end{equation}
To get the required result for the uniform convergence of
totally discrete scheme a sufficient condition is that
$$||\emph{\textbf{R}}^{j}_N||_{\overline{D}_N}\leqslant C,~~~j=1,\ldots,n.$$ By assuming
the uniform boundedness condition on power of discrete transition
operator $\emph{\textbf{R}}_N$ with (53) (see the Remark 5.2 below) and  the hypothesis of Lemma 4.5 that $L^{-4}\leqslant C\Delta
t$, we conclude the main results of this section.
\end{proof}
\begin{remark}
We assume here the uniform boundedness condition as a conjuncture
holds for the transition operator $\textbf{R}_N$, as the
theoretical proof of this is an open problem so far in the
literature. Some partial results in this direction can be obtained
by using a result by Palencia \cite{pal}, but for the present
problem this would require an $\varepsilon$-uniform estimate of the
resolvent of the spatial operator
$\textbf{L}_{x,\varepsilon}$. Here due to lack of available
theoretical result in this direction we assume the uniform
boundedness of the power of discrete transition operator
$\textbf{R}_N$ as a conjuncture. For the support of this
conjuncture  we show some numerical evidence for the spectral radius
of $\textbf{R}_N$. From the numerical results of the Tables 2, 4, and 6 we
observe that the spectral radius of $\textbf{R}_N$ is
strictly less than one and it stabilize as the singular perturbation
parameter $\varepsilon$ becomes small.
\end{remark}

\begin{remark}
Theorem 5.1 proves almost fourth order uniform convergence of the
method in spatial variable under the relation  $L^{-4}\leqslant C \Delta t$. Nevertheless,
from the numerical point of view in Section 6, this condition is an
artificial relation that we never needed in the experiments. Note
that this relation appeared when we prove the convergence of the
regular components in regular region, see eq. (44) in Section 4.
\end{remark}
\section{Numerical experiments}
The proposed  method is implemented on three test examples.  In all
the cases we begin with total number of nodal points $N=64$ and the
time step $\Delta t=0.5$. The maximum error at the nodal points is
calculated for the different values of
$\epsilon$ and N.\\

\noindent \textbf{Example~1:}~Consider the following  system of two
coupled singularly perturbed parabolic problem
\begin{eqnarray*}
\frac{\partial u_1}{\partial t}- \varepsilon \frac{\partial^2 u_1}{\partial x^2}+(2+x)u_1-(1+x)u_2 &=& x^2(1-x)^2,\\
\frac{\partial u_2}{\partial t}- \varepsilon \frac{\partial^2 u_2}{\partial x^2}+(e^x+1)u_2-(1+x)u_1 &=& x^2(1-x)^2,
\end{eqnarray*}
for ~$(x,t)\in (0,1)\times(0,1],$ with the initial-boundary
conditions
$$\textbf{\emph{u}}(x,0)=\textbf{0},~~~~~x\in(0,1),$$
$$\textbf{\emph{u}}(0,t)=\textbf{\emph{u}}(1,t)=\textbf{0},~~~~t\in[0,1],$$
 \noindent and the exact solution is not known.\\

\noindent \textbf{Example~2:}~Consider the following  system of
three coupled singularly perturbed parabolic problem
\begin{eqnarray*}
\frac{\partial u_1}{\partial t}- \varepsilon \frac{\partial^2 u_1}{\partial x^2}+3u_1-(1-x)u_2-(1-x)u_3 &=& 16x^2(1-x)^2,\\
\frac{\partial u_2}{\partial t}- \varepsilon \frac{\partial^2 u_2}{\partial x^2}+(4+x)u_2-2u_1-u_3 &=& t^3,\\
\frac{\partial u_3}{\partial t}- \varepsilon \frac{\partial^2 u_3}{\partial x^2}+(6+x)u_3-2u_1-3u_2 &=& 16x^2(1-x)^2,
\end{eqnarray*}
 for $(x,t)\in D$, with the initial-boundary conditions
$$\textbf{\emph{u}}(x,0)=\textbf{0},~~~~~x\in(0,1),$$
$$\textbf{\emph{u}}(0,t)=\textbf{\emph{u}}(1,t)=\textbf{0},~~~~t\in[0,1],$$
 \noindent and the exact solution is not known.\\

\noindent \textbf{Example~3:}~Consider the following  system of two
coupled singularly perturbed parabolic problem
\begin{eqnarray*}
\frac{\partial u_1}{\partial t}- \varepsilon
\frac{\partial^2 u_1}{\partial x^2}+2u_1-u_2 &=& 1,\\
\frac{\partial u_2}{\partial t}- \varepsilon \frac{\partial^2 u_2}{\partial x^2}+2u_2-u_1 &=& 1,
\end{eqnarray*}
for $(x,t)\in (0,1)\times(0,1],$ with the initial-boundary
conditions
$$\textbf{\emph{u}}(x,0)=\textbf{0},~~~~~x\in(0,1),$$
$$\textbf{\emph{u}}(0,t)=\textbf{\emph{u}}(1,t)=\textbf{0},~~~~t\in[0,1],$$
\noindent   and the exact solution is not known.\\

\begin{table}[h] 
\caption{~Maximum error  and numerical rate of convergence of the
present method  with uniform step size $\Delta t$ in time direction
and generalized Shishkin mesh $S(L)$ with $L=L^*$ in
spatial direction for the Example 1.}
{\mbox{\tabcolsep=2pt\begin{tabular}{@{}ccccccc@{}}  \hline\\
$\varepsilon=2^{-k}$ & $N =64$ &$ N =128$ &$ N =256$ &$N =512$&
$N=1024$ \\
$$ & $\Delta t=0.5$ &$ \Delta t=0.5/4$ &$ \Delta t=0.5/4^2$ &$\Delta t=0.5/4^3$& $\Delta t=0.5/4^4$
\\[.8ex]
\hline
\\[3pt]
 $k$=4  &8.82E-04&     4.42E-05 &   2.71E-06  &  1.69E-07 &   1.06E-08\\[3pt]
      & 4.32 &  4.03&   4.00&   4.00  \\[2pt]
8  &4.19E-04  &  2.57E-05  &  1.60E-06  &  1.00E-07  &  6.26E-09\\[3pt]
      & 4.03 &  4.00&   4.00&   4.00  \\[2pt]
12  &3.91E-04 &   2.43E-05  &  1.52E-06  &  9.52E-08  &  5.95E-09\\[3pt]
      & 4.01 &  3.99&   4.00&   4.00  \\[2pt]
16  &3.88E-04 &   2.41E-05 &   1.52E-06  &  9.51E-08 & 6.00E-09\\[3pt]
      & 4.01 &  3.99&   4.00&   3.99  \\[2pt]
20  &3.86E-04 &   2.41E-05  &  1.52E-06   & 9.48E-08 &5.93E-09\\[3pt]
      & 4.00 &  3.99&   4.00&   4.00  \\[2pt]
24  &3.86E-04 &   2.41E-05  &  1.52E-06   & 9.48E-08 &5.93E-09\\[3pt]
      & 4.00 &  3.99&   4.00&   4.00  \\[2pt]
28  &3.86E-04 &   2.41E-05  &  1.52E-06   & 9.48E-08 &5.93E-09\\[3pt]
      & 4.00 &  3.99&   4.00&   4.00  \\[2pt]
32  &3.86E-04 &   2.41E-05  &  1.52E-06   & 9.48E-08 &5.93E-09\\[3pt]
      & 4.00 &  3.99&   4.00&   4.00  \\[2pt]
\hline
\\[2pt]
    $\emph{E}_{ N, \bigtriangleup t}$& 8.82E-04&     4.42E-05 &   2.71E-06  &  1.69E-07 &   1.06E-08\\[3pt]
    $p^N$&  4.32 &  4.03&   4.00&   4.00 \\[2pt]
\hline
\end{tabular}
\label{Neigs2}
 }}{}
\end{table}
\begin{table}[] 
\caption{~Spectral radius of the transition operator
$\emph{\textbf{R}}_N$ for the Example 1.}
{\mbox{\tabcolsep=2pt\begin{tabular}{@{}ccccccc@{}} \hline\\
$\varepsilon=2^{-k}$ & $N =64$ &$ N =128$ &$ N =256$ &$N =512$&
$N=1024$ \\
$$ & $\Delta t=0.5$ &$ \Delta t=0.5/4$ &$ \Delta t=0.5/4^2$ &$\Delta t=0.5/4^3$& $\Delta t=0.5/4^4$
\\[.8ex]
\hline
 $k$=4  &0.40350&     0.80792&   0.94825  &  0.98681 &   0.99670\\[3pt]
8  &0.57817  &  0.87118  &  0.96616 &  0.99143 &  0.99785\\[3pt]
12  &0.59223 &   0.87965  &  0.96849  &  0.99203  &  0.99800\\[3pt]
16  &0.59810 &   0.88169 &   0.96905  &  0.99217 & 0.99804\\[3pt]
20  &0.59953 &   0.88219  &  0.96919   & 0.99221 &0.99805\\[3pt]
24  &0.59989 &   0.88234  &  0.96923   & 0.99222 &0.99805\\[3pt]
28  &0.59997 &   0.88235  &  0.96923   & 0.99222 &0.99805\\[3pt]
32  &0.59999 &   0.88235  &  0.96923   & 0.99222 &0.99805\\[3pt]
\hline
\end{tabular}
\label{Neigs2}
 }}{}
\end{table}

\begin{table}[h] 
\caption{~Maximum error  and numerical rate of convergence of the
present method  with uniform step size $\Delta t$ in time direction
and generalized Shishkin mesh $S(L)$ with $L=L^*$ in
spatial direction for the Example 2.}
{\mbox{\tabcolsep=2pt\begin{tabular}{@{}ccccccc@{}} \hline\\
$\varepsilon=2^{-k}$ & $N =64$ &$ N =128$ &$ N =256$ &$N =512$&
$N=1024$ \\
$$ & $\Delta t=0.5$ &$ \Delta t=0.5/4$ &$ \Delta t=0.5/4^2$ &$\Delta t=0.5/4^3$& $\Delta t=0.5/4^4$
\\[.8ex]
\hline
\\[3pt]
 $k$=4  &4.48E-02 &   4.09E-03  &  2.17E-04   &  1.35E-05  &  8.44E-07
\\[3pt]
      & 3.45 &   4.23 &   4.01 &  4.00
\\[2pt]
8  &4.44E-02 &   3.58E-03  &  1.97E-04   &  1.22E-05   & 7.63E-07
\\[3pt]
      & 3.63 &  4.18&   4.01&   4.00  \\[2pt]
12  &4.43E-02  &  3.54E-03  &  1.95E-04   & 1.21E-05   & 7.58E-07
\\[3pt]
      & 3.65 &  4.18&   4.01&   4.00  \\[2pt]
16  &4.43E-02  &  3.54E-03  &  1.95E-04   & 1.21E-05   & 7.58E-07
\\[3pt]
      & 3.65 &  4.18&   4.01&   4.00  \\[2pt]
      20  &4.43E-02  &  3.54E-03  &  1.95E-04   & 1.21E-05   & 7.58E-07
\\[3pt]
      & 3.65 &  4.18&   4.01&   4.00  \\[2pt]
      24  &4.43E-02  &  3.54E-03  &  1.95E-04   & 1.21E-05   & 7.58E-07
\\[3pt]
      & 3.65 &  4.18&   4.01&   4.00  \\[2pt]
      28  &4.43E-02  &  3.54E-03  &  1.95E-04   & 1.21E-05   & 7.58E-07
\\[3pt]
      & 3.65 &  4.18&   4.01&   4.00  \\[2pt]
32  &4.43E-02  &  3.54E-03  &  1.95E-04   & 1.21E-05   & 7.58E-07
\\[3pt]
      & 3.65 &  4.18&   4.01&   4.00  \\[2pt]

\hline
\\[2pt]
    $\emph{E}_{ N, \bigtriangleup t}$&4.48E-02 &   4.09E-03  &  2.17E-04   &  1.35E-05  &  8.44E-07\\[3pt]
    $p^N$&   3.45 &   4.23 &   4.01 &  4.00\\[2pt]
\hline
\end{tabular}
\label{Neigs2}
 }}{}
\end{table}

\begin{table}[] 
\caption{~Spectral radius of the transition operator
$\emph{\textbf{R}}_N$ for the Example 2.}
{\mbox{\tabcolsep=2pt\begin{tabular}{@{}ccccccc@{}} \hline\\
$\varepsilon=2^{-k}$ & $N =64$ &$ N =128$ &$ N =256$ &$N =512$&
$N=1024$ \\
$$ & $\Delta t=0.5$ &$ \Delta t=0.5/4$ &$ \Delta t=0.5/4^2$ &$\Delta t=0.5/4^3$& $\Delta t=0.5/4^4$
\\[.8ex]
\hline
 $k$=4  &0.01380&     0.68153&   0.90960  &  0.97661 &   0.99410\\[3pt]
8  &0.25983  &  0.72130  &  0.93118 &  0.98234 &  0.99556\\[3pt]
12  &0.32590 &   0.77295  &  0.93796  &  0.98412  &  0.99601\\[3pt]
16  &0.34288 &   0.78200 &   0.94065  &  0.98482 & 0.99618\\[3pt]
20  &0.35146 &   0.78561  &  0.94172   & 0.98510 &0.99625\\[3pt]
24  &0.35448&   0.78705  &  0.94214  & 0.98526 &0.99628\\[3pt]
28  &0.35577 &   0.78762  &  0.94231   & 0.98528 &0.99628\\[3pt]
32  &0.35625 &   0.78784  &  0.94231   & 0.98528 &0.99628\\[3pt]
\hline
\end{tabular}
\label{Neigs2}
 }}{}
\end{table}

\begin{table}[h] 
\caption{~Maximum error  and numerical rate of convergence of the
present method  with uniform step size $\Delta t$ in time direction
and generalized Shishkin mesh $S(L)$ with $L=L^*$ in
spatial direction for the Example 3.}
{\mbox{\tabcolsep=2pt\begin{tabular}{@{}ccccccc@{}} \hline\\
$\varepsilon=2^{-k}$ & $N =64$ &$ N =128$ &$ N =256$ &$N =512$&
$N=1024$\\
$$ & $\Delta t=0.5$ &$ \Delta t=0.5/4$ &$ \Delta t=0.5/4^2$ &$\Delta t=0.5/4^3$& $\Delta t=0.5/4^4$
\\[.8ex]
\hline
\\[3pt]
 $k$=4  &3.32E-02 &   7.75E-03  &  1.91E-03 &   4.76E-04  &  9.52E-05\\[3pt]
      & 2.10 &   2.02 &    2.00 &    2.32
 \\[2pt]
8  & 3.29E-02 &    7.75E-03 &   1.91E-03  &  4.76E-04 &   9.53E-05 \\[3pt]
      & 2.08 &    2.02&   2.00& 2.32
  \\[2pt]
12  & 1.67E-02 &   2.41E-03  &  5.17E-04 &   1.24E-04  &  2.49E-05\\[3pt]
      & 2.80&   2.22&   2.06&  2.32 \\[2pt]
16  &1.67E-02  &  2.16E-03  &  4.41E-04   & 1.24E-04  &  2.47E-05\\[3pt]
      & 2.95 & 2.29 & 1.84 &  2.32 \\[2pt]
20  &1.67E-02  &  2.16E-03  &  4.41E-04   & 1.24E-04  &  2.47E-05\\[3pt]
      & 2.95 & 2.29 & 1.84 &  2.32 \\[2pt]
24  &1.67E-02  &  2.16E-03  &  4.41E-04   & 1.24E-04  &  2.47E-05\\[3pt]
      & 2.95 & 2.29 & 1.84 &  2.32 \\[2pt]
28  &1.67E-02  &  2.16E-03  &  4.41E-04   & 1.24E-04  &  2.47E-05\\[3pt]
      & 2.95 & 2.29 & 1.84 &  2.32 \\[2pt]
32  &1.67E-02  &  2.16E-03  &  4.41E-04   & 1.24E-04  &  2.47E-05\\[3pt]
      & 2.95 & 2.29 & 1.84 &  2.32 \\[2pt]
\hline
\\[2pt]
    $\emph{E}_{ N, \bigtriangleup t}$ &3.32E-02 &   7.75E-03  &  1.91E-03 &   4.76E-04  &  9.52E-05\\[3pt]
    $p^N$&  2.10 &   2.02 &    2.00 &    2.32 \\[2pt]
\hline
\end{tabular}
\label{Neigs2}
 }}{}
\end{table}

\begin{table}[] 
\caption{~Spectral radius of the transition operator
$\emph{\textbf{R}}_N$ for the Example 3.}
{\mbox{\tabcolsep=2pt\begin{tabular}{@{}ccccccc@{}} \hline\\
$\varepsilon=2^{-k}$ & $N =64$ &$ N =128$ &$ N =256$ &$N =512$&
$N=1024$ \\
$$ & $\Delta t=0.5$ &$ \Delta t=0.5/4$ &$ \Delta t=0.5/4^2$ &$\Delta t=0.5/4^3$& $\Delta t=0.5/4^4$
\\[.8ex]
\hline
 $k$=4  &0.42429&     0.88209&   0.95077  &  0.98745 &   0.99685\\[3pt]
8  &0.58776  &  0.88234  &  0.96806 &  0.99192 &  0.99792\\[3pt]
12  &0.59923 &   0.88235  &  0.96916  &  0.99220  &  0.99793\\[3pt]
16  &0.59995 &   0.88235 &   0.96923  &  0.99222 & 0.99794\\[3pt]
20  &0.60000 &   0.88235 &   0.96923  &  0.99222 & 0.99794\\[3pt]
24  &0.60000 &   0.88235 &   0.96923  &  0.99222 & 0.99794\\[3pt]
28  &0.60000 &   0.88235 &   0.96923  &  0.99222 & 0.99794\\[3pt]
32  &0.60000 &   0.88235 &   0.96923  &  0.99222 & 0.99794\\[3pt]
\hline
\end{tabular}
\label{Neigs2}
 }}{}
\end{table}
As the exact solution is not known for these examples, we estimate the maximum nodal
error, $\widetilde{\emph{E}}_{\varepsilon, N, \bigtriangleup
t}=\displaystyle \max_{\forall
i,n}\widetilde{\textbf{\emph{e}}}_{\varepsilon}^{N, \bigtriangleup
t}(i,n\Delta t),$ for different values of $\varepsilon $ and $N$
where $\widetilde{\textbf{\emph{e}}}_{\varepsilon}^{ N,
\bigtriangleup t}(i,n\Delta
t)=|\textbf{\emph{u}}_N(x_i,t_n)-\widetilde{\textbf{\emph{u}}}_{N}(x_i,t_n)|$.
We use a variant of the double mesh principle, assume
$\textbf{\emph{u}}_{N}(x_i,t_n)$ denotes the numerical solution at
the nodal point $(x_i,t_n)$ on the tensor product mesh of the
generalized Shishkin mesh $\overline{\Omega}_N^S$ with $N+1$ nodal points in spatial
direction and  a uniform mesh of step size $\Delta t$  in time
direction, and $\widetilde{\textbf{\emph{u}}}_{N}(x_i,t_n)$ denotes
the numerical solution at the nodal point $(x_i,t_n)$ on the tensor
product mesh
 $\{(\widehat{x}_i,\widehat{t}_n)\}$ that  contains the
mesh points of the original mesh and their
midpoints.\\

 In the
standard way, we estimate the
  classical  convergence rate, for each fixed
 $\varepsilon$, by

\[p^N_{\varepsilon}=\frac
 {\ln(\widetilde{\emph{E}}_{\varepsilon, N, \bigtriangleup
t})-\ln(\widetilde{\emph{E}}_{\varepsilon, 2N, \bigtriangleup
t/4})}{\ln2},
\]
 and the  parameter-robust   convergence rate $p^N$  by
\[p^N=\frac
 {\ln(\emph{E}_{ N, \bigtriangleup
t})-\ln(\emph{E}_{ 2N, \bigtriangleup t/4})}{\ln2},
\]
where $\emph{E}_{ N, \bigtriangleup t}=\displaystyle
\max_{\forall \varepsilon}\widetilde{\emph{E}}_{\varepsilon, N,
\bigtriangleup t}$.\\

Using $L<\ln N$ instead of $\ln N$; this means we are trying to
bring the point $x_1$ closer to $x=0$ and this provides the higher
density of the mesh points in the layers. The motivation for this is
the fact that the better performance of the mesh $S(L)$ can be governed by
the high density of mesh points in the layers. The smallest value of
$L$ is chosen to be  $L^*=L^*(N)$ which satisfies
$$e^{-L^*}=L^*/N.$$

For the different values of  $N$ and $\varepsilon$, Table $1$, Table
$2$, and Table $3$ represent the maximum error
$\widetilde{\emph{E}}_{\varepsilon, N, \bigtriangleup t}$ and the
classical rate of convergence $p^N_{\varepsilon}$ of the present
method for the
 Example 1, Example 2, and Example 3, respectively. The last two rows in each of the tables
(Table 1, Table 3, and Table 5) represent the maximum error with
respect to each nodal point for all value of $\varepsilon$, that is
$\emph{E}_{ N, \bigtriangleup t}$ ; and  the parameter-robust
numerical rate of convergence $p^N$.
\\

To show the numerical evidence for the  uniform stability of the
transition operator $\emph{\textbf{R}}_N$, we calculate the
spectral radius of $\emph{\textbf{R}}_N$ for different value of $N$, $\Delta t$ and $\varepsilon$. Table 2, Table 4, and Table 6 display
the spectral radius of this operator for the Example 1, Example 2,
and Example 3, respectively. We clearly observe that the spectral
radius for all value of $N$, $\Delta t$ and $\varepsilon$ is always
strictly less than one. Moreover, we observe that the spectral
radius stabilized for the small value of singular perturbation
parameter $\varepsilon$. This stabilization of spectral radius for
small value of $\varepsilon$ indicates the uniform
stability of the operator $\emph{\textbf{R}}_N$.\\

Observe that the  data in Example 3 does not satisfy the zeroth
order compatibility conditions at the nodal points $(0,0)$ and
$(1,0)$. Moreover,  Table 5 shows the low order of accuracy of the
present method for the Example $3$ in comparison with the numerical
results presented in Table 1 and Table 3 for the Example $1$ and Example $2$, respectively; in which the sufficient compatibility conditions are
satisfied. From this one can infer that, in practice some of the theoretical compatibility conditions seems to be very necessary for high order convergence of the present method. 
Clearly the numerical results presented in Table 1 and Table 3
 verify our theoretical
 results.\\

 Previously, the Crank-Nicolson
method has been used
 in the framework of scalar singulary perturbed problem, for
instance, in \cite{clav05} to solve one dimensional parabolic
problems of convection diffusion type. Recently, Clavero et al. \cite{clav06}
considered the Crank-Nicolson method on uniform mesh in time
discretization and the central difference scheme on standard
Shishkin mesh in spatial discretization for a system of two coupled time
dependent singularly perturbed reaction-diffusion problems. In this article, to obtain a high order robust approximation we considered the Crank-Nicolson method in time direction and a hybrid scheme which is a
suitable combination of fourth order compact difference scheme (or
HODIE scheme ) and standard central difference scheme on a
generalized Shishkin mesh in spatial direction. Here it is interesting  to see how the HODIE
technique permits to obtain a uniformly convergent method having
order bigger than two in spatial direction. Earlier, the HODIE scheme for scalar
singularly perturbed reaction-diffusion problems has been considered in Clavero and Gracia \cite{clav07},  and it is proved that
the scheme is third order uniformly convergent on standard
Shishkin mesh.  But the extension of new HODIE scheme
on standard Shishkin mesh is not possible in the case of system
of coupled reaction-diffusion problems. It can be seen that the
coefficients $q_i^{k}$'s in (24)-(25) is not always positive at the
transition points, due to the fact that standard Shishkin mesh
is very anisotropic in nature. This shows that the operator in
(24)-(25) is not of positive type on standard Shishkin mesh. At
the moment, when $N^{-1}<\sqrt{\varepsilon}$ we can not find a
difference scheme of positive type which is high order uniformly
convergent on standard Shishkin mesh for system of coupled
reaction-diffusion problems.  To avoid this, one can use the central
difference scheme in the regular region $[\tau,1-\tau]$ and the
fourth order compact difference scheme in $(0,\tau)\cup(1-\tau,1)$.
But  this combination gives only second order uniformly convergent
result.  In order to increase the order of convergence 
and to maintain the positivity of the present discrete operator in
(24)-(25), we consider a generalized
Shishkin mesh  instead of standard Shishkin mesh. The Lemma 4.1 shows that the
discrete operator in $(24)$-(25) on a generalized Shishkin mesh is
of positive type and the analysis in Section 4 shows that the scheme
$(24)$-(25) is  almost fourth order uniformly convergent with
respect to the perturbation parameter $\varepsilon$.  Here we also want to point out one more  benefit of generalized
Shishkin mesh over standard Shishkin mesh in the
numerical methods presented in \cite{clav06} and \cite{gra07 } for
parabolic reaction diffusion systems. It is proved that the numerical methods presented
in  \cite{clav06} and \cite{gra07 } have almost second order uniform
convergence under the theoretical relation $N^{-q}\leqslant C\Delta t$,
where $0<q<1$. Note that the theoretical relation appeared in the analysis when
the barrier function technique was used to prove the second order
convergence of the regular component on standard Shishkin mesh.
While if we use generalized Shishkin mesh instead of standard
Shishkin mesh in \cite{clav06} and \cite{gra07 } then we can claim
almost second order uniform convergence in spatial variable without
any theoretical relation by using the same analysis technique.

\section{Conclusions}
We presented  a high order parameter-robust numerical method
for a  system of $(M\ge2)$ coupled  singularly perturbed parabolic
 reaction-diffusion  problem (1)-(3). The problem is
discretized using the Crank-Nicolson method on an uniform mesh in time
direction and a suitable combination of the fourth order compact
difference scheme and the central difference scheme on a
generalized Shishkin mesh in spatial direction.
The essential idea in this method is to use a generalized Shishkin
mesh  in order to attain a high order
parameter-robust convergence in spatial variable.  The fine parts of standard Shishkin mesh  and generalized Shishkin mesh
 are identical, but the coarse part of
generalized Shishkin mesh is a smooth continuation of the fine mesh and is
no longer equidistant. Using this fact we proved that the present
method is second order uniformly convergent in time  and  almost
fourth order uniformly convergent in spatial variable, if the
discretization parameters satisfy a non-restrictive relation.
Numerical experiments are presented to validate
 the theoretical results and also the results of the experiments indicate that the relation between the discretization parameters is
 not necessary in practice.

\end{document}